\newtheorem{theorem}{Theorem}[section]
\newtheorem{lemma}[theorem]{Lemma}
\newtheorem{proof of lemma}[theorem]{Proof of Lemma}
\newtheorem{proposition}[theorem]{Proposition}
\newtheorem{corollary}[theorem]{Corollary}
\theoremstyle{definition}
\newtheorem{definition}[theorem]{Definition}
\newtheorem{question}[theorem]{Question}
\newtheorem{res}[theorem]{Result}
\newtheorem{remark}[theorem]{Remark}
\numberwithin{equation}{section}
\newcommand{\bigslant}[2]{{\raisebox{.2em}{$#1$}\left/\raisebox{-.2em}{$#2$}\right.}}
\begin{document}

%%%%% To ease editing, for IMPAN journals, add:

\baselineskip=17pt

%%%%%%%%%%%%%%%%

\title[] { Projection from space of \emph{two-Lipschitz} operators onto the space of Bilinear maps }
\author[Arindam Mandal]{Arindam Mandal$^\dagger$}
\address{Arindam Mandal, School of Mathematical Sciences, National Institute of Science Education and Research
Bhubaneswar, An OCC of Homi Bhabha National Institute, P.O. Jatni, Khurda, Odisha 752050,
India.}
\email{arindam.mandal@niser.ac.in}
\thanks{The author is supported by a research fellowship from the Department of Atomic Energy (DAE), Government of India.}
\thanks{$\dagger$ \tt{Corresponding author}}
\date{}

\begin{abstract} 
In this article, we establish the existence of a norm-one projection from the space of all \emph{two-Lipschitz} operators onto the space of all bounded bilinear operators under certain conditions on the corresponding codomain spaces, using the method of invariant means. We also show that, when the codomain is an injective Banach space, the quotient of the \emph{two-Lipschitz} operator space by the bounded bilinear space is isometrically isomorphic to a specific operator space, via vector-valued duality. We conclude by proving a necessary and sufficient condition for a \emph{two-Lipschitz} operator to be a bilinear map. As an application of the theory developed here, we present an alternative proof that $\bigslant{L^{\infty}(\mathbb{R}\times \mathbb{R})}{span\{\textbf{1}\}}$ is a dual space.
\end{abstract}

\subjclass[2020]{Primary 46B28; Secondary 46B10}
\keywords{Invariant mean,  Vector-valued duality,  Lipschitz-free space.}

\maketitle
\pagestyle{myheadings}
\markboth{}{\emph{two-Lipschitz} Operators: Complemented Subspaces and Quotient Representations}
\section{\bf Introduction and Preliminaries}
Over the years, Lipschitz spaces have attracted significant research interest in the field of nonlinear functional analysis. Let $X$ be a real normed linear space, and let $Lip_0(X, \mathbb{R})$ be the space of all real-valued Lipschitz maps that vanish at $0$. A notable development  occurred in $1964$, when Joram Lindenstrauss in \cite[Theorem 2]{ONPIBS}  studied the existence of a projection map from $Lip_0(X, \mathbb{R})$ onto the space of all real-valued bounded linear functionals, denoted by 
$X^{\ast}$. Now the linear dual $X^{\ast}$ is a closed subspace of the Lipschitz dual $Lip_0(X, \mathbb{R})$. Then
 as a consequence of Lindenstrauss's result (also see \cite[Proposition 7.5, p 173]{GNFA}), it is immediate that the $X^{\ast}$ is complemented in $Lip_0(X, \mathbb{R})$. This result highlights a connection between linear and Lipschitz functionals, bridging linear theory and Lipschitz geometry. Subsequent studies revealed that $L(X,Y)$, the space of bounded linear maps between Banach spaces $X$ and $Y$, forms a closed subspace of $Lip_0(X,Y)$ under the natural Lipschitz norm:
$$\|T\|=Lip(T):= \sup\limits_{x,z \in X;\ x \neq z} \frac{\|Tx-Tz\|}{\|x-z\|}.$$
However, the question of whether $L(X,Y)$ is complemented in $Lip_0(X,Y)$ remained unaddressed untill recently, when Karn and Mandal have studied this in \cite{karn2025positionlxylip0x}. More recently, several authors have expanded the notion of Lipschitz spaces to a two-variable framework, initially in $2009$, Dubei et al. introduced in \cite{FBSAEOLM} the definition of \emph{two-Lipschitz} maps which is defined on the
cartesian product of two pointed metric spaces (a metric space with a distinguished point) with values in a Banach space, that is Lipschitz separately
in each variable. Under some diﬃcult adequate requirements, it is shown that every \emph{two-Lipschitz} mapping
is associated with a continuous bilinear mapping from the product of two suitable free Banach spaces to another
Banach space (see \cite{FBSAEOLM}). This idea worked successfully without any conditions in \cite{PSGBBMAD}. There, Sánchez-Pérez
present a suitable definition of real-valued \emph{two-Lipschitz} mappings (under the name of Lipschitz bi-forms)
that admits a good continuous bilinearization between Banach spaces. Further, Hamidi et al. \cite{TLOI} introduced a new concept of \emph{two-Lipschitz} operator ideals between pointed metric spaces
and Banach spaces. They extended to the two-Lipschitz mappings setting a linear procedure for creating ideals of
\emph{two-Lipschitz} operators from a given linear operator ideal in \cite{TLOI}. Several recent works have been carried out in this area; see, for example, \cite{BIOTLOBMABS}, \cite{FOTLIO}, \cite{LCIHIALII}, \cite{LLOATCTLTP}. Throughout this article, we adopt the definition of \emph{two-Lipschitz} operators given by Hamidi et al. in \cite{TLOI}.

One of our main purposes is to study the complementation of the space of bilinear maps between Banach spaces within the space of \emph{two-Lipschitz} maps between Banach spaces. We also establish a non-degenerate vector-valued dual pairing between the space of \emph{two-Lipschitz} operators and the projective tensor product of two suitable Lipschitz free spaces. To begin with, the author's concern regarding one of the questions addressed in this article, we first consider the following discussion.
\subsection{\emph{Two-Lipschitz} operators in contrast with bounded bilinear maps}
Let $X, Y$, and $E$ be real Banach spaces. A map $T$ from $X \times Y$ into $E$ is said to be \emph{two-Lipschitz} if there exists an absolute constant $k > 0$ such that $$\|T(x,y)- T(x',y)-T(x,y')+T(x',y')\| \le k \|x - x'\| \cdot \|y-y'\|$$ for all $x, x' \in X$ and $y,y' \in Y$. 

 For the collection of all such mappings, we write $BLip(X, Y; E)$. Then it is immediate that $BLip(X, Y; E)$ forms a vector space with respect to pointwise addition and scalar multiplication. Our main focus is to study those \emph{two-Lipschitz} mappings $T$ belongs to $BLip(X, Y; E)$, satisfying $T(x,0)=0$ for all $x \in X$ and $T(0, y)=0$ for all $y\in Y$, that is. $BLip_0(X, Y; E) = \left\lbrace T\in BLip(X, Y): T(x,0)=0=T(0,y)\ \forall x \in X, \forall \ y\in Y\right\rbrace.$
 It is well known that
$$BLip(T)=\sup \left\lbrace \frac{\Vert T(x,y)- T(x',y)-T(x,y')+T(x',y') \Vert}{\|x-x'\|\cdot\|y-y'\|}: x,x' \in X, y,y' \in Y; x \ne x', y\ne y' \right\rbrace,$$ is a norm on $BLip_0(X, Y; E)$, and with respect to $BLip(\cdot)$, $BLip_{0}(X,Y; E)$ forms a Banach space. 

The space 
$BLip_0(X, Y; E)$ is often viewed as a natural nonlinear extension of
$Blin(X,Y;E)$, the space of bounded bilinear operators from 
$X\times Y$ into 
$E$. It can be easily seen that 
$Blin(X,Y;E)$ embeds as a subspace of $BLip_0(X,Y ; E)$. The bilinear space 
$Blin(X,Y;E)$ is equipped with the standard operator norm:
$$\| S \| = \sup \left\{ \frac{\| S(x, y) \|}{\|x\|\cdot\|y\|}: x \in X\setminus\{0\} ~ \mbox{and} ~ y \in Y\setminus\{0\} \right\}$$ 
for any $S \in Blin(X, Y; E)$. Moreover, in view of the definition of $BLip(\cdot)$ one can easily see for $S \in Blin(X, Y; E)$, 
$\|S\|=BLip(S)$,  so that $Blin(X, Y; E)$ is a closed subspace of $BLip_0(X, Y; E)$ (see \cite[Remark 2.4]{TLOI}). Based on the discussion above  and motivated by Lindenstrauss's result, the authors are led to consider the following question:
\begin{question} \label{qns1}
	Is $Blin(X, Y; E)$ complemented in $BLip_0(X, Y; E)$?
\end{question}
 We show that there is a surjective linear projection 
$P : BLip_0(X, Y; E) \xrightarrow{} Blin(X, Y; E)$ with norm one, whenever $E$ is a dual space. The proof technique relies on the existence of a vector-valued invariant mean (a vector-valued bounded linear map from the space of bounded functions on a normed space; details have been discussed later), which is guaranteed whenever the codomain space $E$ is a dual space. Given a \emph{two-Lipschitz} map
$T \in BLip_0(X, Y; E)$, we construct a suitable function in
$\ell^{\infty}( X\times Y, E)$ and then apply the vector-valued invariant mean to the bounded function derived from
$T$. The existence of such a mean enables us to affirmatively answer Question \ref{qns1} in the case where
$E$ is a dual space. However, the general case remains open. 
In summary, when 
$E$ is a dual space, 
$Blin(X, Y; E)$ is a complemented subspace of 
$BLip_{0}(X, Y; E)$, since 
$BLip_{0}(X, Y; E)=Blin(X, Y; E) \oplus_{1} \ker(P)$ with equivalent norms, implying that 
$\bigslant{BLip_{0}(X, Y; E)}{Blin(X, Y; E) }$ is topologically isomorphic to 
$\ker(P)$.

\subsection{General quotient structure of \emph{two-Lipschitz} space}
However, the authors are further interested in addressing the following question:
\begin{question} \label{qns2}
Is this quotient space not only topologically but also isometrically isomorphic to a concrete space of operators?
\end{question}
In response, we show that when 
$E$ is an injective Banach space; the quotient is indeed isometrically isomorphic to a well-defined operator space. Moreover, we identify additional quotient spaces of 
$BLip_0(X, Y; E)$ that admit isometric isomorphisms with certain space of operators. 
To proceed further, let us recall the construction of the Lipschitz free space $F(X)$ (introduced by Godefroy and Kalton in \cite{LFBS}).  
For any $x \in X$, we denote by $\delta_x$ the evaluation functional, i.e. $\delta_x(f)=f(x)$ for $f \in Lip_0(X, \mathbb{R})$. It
is easy to see that $\delta_X : X \to span\{\delta_x : x \in X\}; x \mapsto \delta_x$ is an isometric (nonlinear) embedding of $X$ into $Lip_0(X, \mathbb{R})^*$. The space
$F(X)$ is deﬁned to be the closed linear span of $\{\delta_x: x \in X\}$ with the dual
space norm denoted simply by $\|\cdot\|$. It is known that $F(\mathbb{R})$ is isometrically isomorphic to $L^1(\mathbb{R})$, we denote it as $F(\mathbb{R}) \cong L^1(\mathbb{R})$ (see \cite[p 128]{LFBS}). Then we consider a linear contraction $\beta_X: F(X) \to X$, a left inverse of the Lipschitz map $\delta_X$. For details and additional properties, see \cite{OTSOLFS}, \cite{LA}.
Furthermore, we establish the following more general result, which provides a partial answer to the above-posed Question \ref{qns2}:
\begin{res} \label{so quotient of Blip}
    For any subspace $\mathcal{D}$ of $F(X)\hat{\otimes}_{\pi}F(Y)$ and $E$ is an injective Banach space, $$\bigslant{BLip_{0}(X, Y;E)}{^{\diamondsuit}\mathcal{D} } \cong  L(\mathcal{D}, E).$$
\end{res}
 Where $F(X)\hat{\otimes}_{\pi}F(Y)$ be the complete projective tensor product of $F(X)$ and $F(Y)$ (with the norm denoted as $\|\cdot\|_{\pi}$) and $ ^{\diamondsuit}\mathcal{D}:= \{ T \in BLip_{0}(X, Y; E) : T_L(\xi)=0 ~ \forall \xi \in \mathcal{D}\}$ ($T_L$ be unique linearization map corresponding to $T \in BLip_0(X,Y; E)$). In the special case for $X, Y$ and $E$ to be $\mathbb{R}$, using this result, we obtain an alternative proof of the following  consequence:
$$\bigslant{L^{\infty}(\mathbb{R} \times \mathbb{R})}{\{ \mbox{set of all constant maps on}\ \mathbb{R} \times \mathbb{R} \}}$$ is isometrically isomorphic to a dual space. Moreover, we explicitly identify its predual towards the end of this article.
  For details on bilinear maps and tensor products, we refer to \cite{BMATPIOT}. Through the study of particular aspects of $BLip_0(X, Y; E)$ and $F(X)\hat{\otimes}_{\pi}F(Y)$, we also provide a necessary and sufficient condition under which a \emph{two-Lipschitz} operator is bilinear.

The rest of the paper is organized as follows. In Section \ref{sec2}, we provide the proof of one of the main results, Theorem \ref{proj}, which offers a partial answer to Question \ref{qns1}. In Section $\ref{sec3}$ we study an $E$-valued dual action between $BLip_0(X, Y; E)$ and $F(X)\hat{\otimes}_{\pi}F(Y)$. Section \ref{sec4} presents several general results concerning the quotients of \emph{two-Lipschitz} operator spaces. Also, Remark \ref{four rem }$(2)$ answers the Question \ref{qns2} whenever $E$ is an injective Banach space. Section \ref{sec5} provides some nontrivial examples of the quotient space. 
\section{Embedding of $Blin(X,Y; E)$ into $BLip_0(X, Y; E)$} \label{sec2}
Let $X, Y$, and $E$ be real Banach spaces. We again recall that for $T \in Blin(X, Y; E)$, we have $\Vert T \Vert = BLip(T)$ so that $Blin(X, Y)$ is a closed subspace of $BLip_0(X, Y; E)$. This section proves that $Blin(X, Y; E)$ is a complemented subspace of $BLip_0(X, Y; E)$ whenever $E$ is a dual Banach space. In this case, we show that a projection exists from $BLip_0(X, Y; E)$ onto $Blin(X, Y; E)$. 

Given a map $f : X \times Y \xrightarrow{} E$, 
we define its translation by a point 
$(p,q) \in X \times Y$ as the map 
$f_{p,q} : X \times Y \xrightarrow{} E$, where
$$f_{p,q}(x,y):= f(p+x, q+y)$$ 
for all $(x,y) \in X \times Y$. We begin with the following result. 
\begin{lemma} \label{lem1}
    Let $T \in BLip_0(X,Y; E)$, $w, w' \in X$ and $z,z' \in Y$. Consider $T_{p,q} \ $ ; translation of $T$ by any $(p,q) \in X \times Y$; that is $T_{p,q}(x,y)= T(p+x, q+y)$ for all $(x,y) \in X \times Y$. Then the following hold:
    \begin{enumerate}
        \item  $\phi_T^{w,z}$ defined by $\phi_T^{w,z}= T_{w,z} - T_{w,0}-T_{0,z}+ T$ belongs to $\ell^{\infty}(X \times Y; E)$.
        \item $\phi_T^{w+w',z}= \left(\phi_T^{w,z}\right)_{w',0} + \phi_T^{w',z}$ and $\phi_T^{w,z+ z'} = \left(\phi_T^{w,z}\right)_{0,z'} + \phi_T^{w,z'}$.
        \item $\left(\phi_T^{-w,z}\right)_{w,0}= -\phi_T^{w,z}$.
        \item $ \phi_T^{2w,z}= \left(\phi_T^{w,z}\right)_{w,0} + \phi_T^{w,z}$ and $\phi_T^{w,2z}= \left(\phi_T^{w,z}\right)_{0,z} + \phi_T^{w,z}$.
    \end{enumerate}
    \end{lemma}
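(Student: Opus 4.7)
The plan is to verify each of the four statements by direct algebraic manipulation using only the definition of $\phi_T^{w,z}$ and the two-Lipschitz inequality; once identity (2) is established, identities (3) and (4) follow as special cases, so the work concentrates on (1) and (2).

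For part (1), I would evaluate
\[
\phi_T^{w,z}(x,y) = T(w+x,z+y) - T(w+x,y) - T(x,z+y) + T(x,y)
\]
and recognize the right-hand side as exactly the expression controlled by the two-Lipschitz inequality with the four points $(w+x,z+y), (x,z+y), (w+x,y), (x,y)$. This immediately yields $\|\phi_T^{w,z}(x,y)\| \le BLip(T)\,\|w\|\,\|z\|$ uniformly in $(x,y)$, proving boundedness; I would also record the sharper observation that $\|\phi_T^{w,z}\|_\infty \le BLip(T)\,\|w\|\,\|z\|$, since this bound is what will matter in the later application to invariant means.

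For part (2), I would expand $(\phi_T^{w,z})_{w',0}(x,y) = \phi_T^{w,z}(w'+x,y)$ and $\phi_T^{w',z}(x,y)$ from the definition, then add them. Four of the eight terms should cancel in pairs (specifically the $T(w'+x,z+y)$ and $T(w'+x,y)$ terms appear with opposite signs), leaving exactly $\phi_T^{w+w',z}(x,y)$. The symmetric identity for $\phi_T^{w,z+z'}$ is obtained the same way by swapping the roles of the two variables.

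The remaining parts are immediate corollaries. For (3), I would apply the second formulation of (2) with $w$ and $w'$ interchanged, then substitute $w' = -w$ to obtain $\phi_T^{0,z} = (\phi_T^{-w,z})_{w,0} + \phi_T^{w,z}$; since $\phi_T^{0,z}(x,y) = T(x,z+y) - T(x,y) - T(x,z+y) + T(x,y) = 0$ identically, rearrangement gives the claimed identity. For (4), I would set $w' = w$ in (2) to get the first equality, and analogously set $z' = z$ in the second identity of (2) to get the second. There is no real obstacle here; the only point requiring minor care is keeping track of signs and the order of the summands when unpacking translations, which is purely bookkeeping.
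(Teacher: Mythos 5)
Your proposal is correct and follows essentially the same route as the paper: part (1) is the two-Lipschitz inequality applied to the four points in the expansion of $\phi_T^{w,z}(x,y)$, and part (2) is the same direct expansion-and-cancellation. The only (harmless, indeed slightly cleaner) deviation is that you deduce (3) and (4) as specializations of (2) (using $\phi_T^{0,z}=0$ for (3) and $w'=w$, $z'=z$ for (4)), whereas the paper re-verifies them by direct computation.
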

\begin{proof}
Let $T \in BLip_0(X,Y; E)$ and fix $(w,z) \in X \times Y$. Then for any $(x,y) \in X\times Y$
\begin{eqnarray*}
     \|\phi_T^{w,z}(x,y)\| &=& \|T_{w,z}(x,y) - T_{w,0}(x,y)-T_{0,z}(x,y)+ T(x,y)\|\\
     &=& \|T(w+x, z+y)-T(w+x, y)-T(x, z+y)+ T(x, y)\|\\
     &\leq& BLip(T)\|w\|\|z\|.
\end{eqnarray*}
   Therefore, $\phi_T^{w,z} \in \ell^{\infty}(X \times Y; E)$ with $\|\phi_T^{w,z} \|_{\infty} \leq BLip(T)\|w\|\|z\|$.

   Let $w, w' \in X$ and $z,z' \in Y$. Then for any $(x,y) \in X \times Y$
   \begin{eqnarray*}
       \left(\phi_T^{w,z}\right)_{w',0} (x,y) &=& \left(T_{w,z} - T_{w,0}-T_{0,z}+ T\right)_{w',0}(x,y)\\
       &=& T(w+w'+x, z+y) - T(w+w'+x, y) - T(w'+x, z+y) + T(w'+x, y)\\
       &=& \left(T_{w+w',z}- T_{w+w',0}-T_{w',z}+ T_{w',0}\right)(x,y).
   \end{eqnarray*}
   This implies that $\left(\phi_T^{w,z}\right)_{w',0}= T_{w+w',z}- T_{w+w',0}-T_{w',z}+ T_{w',0}$. Further
   \begin{eqnarray*}
       \phi_T^{w+w',z} &=& T_{w+w',z} - T_{w+w',0}-T_{0,z}+ T\\
       &=& \underbrace{T_{w+w',z} - T_{w+w',0} - T_{w',z} +T_{w',0}} + \underbrace{T_{w',z} -T_{w',0} -T_{0,z}+ T}\\
       &=& \left(\phi_T^{w,z}\right)_{w',0} + \phi_T^{w',z}.
   \end{eqnarray*}
   In a similar manner we have $\phi_T^{w,z+ z'} = \left(\phi_T^{w,z}\right)_{0,z'} + \phi_T^{w,z'}$. 
   
   Again for any $(x,y) \in X \times Y$ 
   \begin{eqnarray*}
       \left(\phi_T^{-w,z}\right)_{w,0}(x,y)&=& \left(T_{-w,z} - T_{-w,0}-T_{0,z}+ T\right)_{w,0}(x,y)\\
       &=& T(x,y+z) -T(x,y)-T(w+x, y+z) +T(w+x,y)\\
       &=& -\left( \left(T_{w,z} - T_{w,0} -T_{0,z} +T\right)\right)(x,y)\\
       &=& -\phi_T^{w,z} (x,y).
   \end{eqnarray*}
   Hence $\left(\phi_T^{-w,z}\right)_{w,0}= -\phi_T^{w,z}$. Also,
   \begin{eqnarray*}
       \left(\phi_T^{w,z}\right)_{w,0}(x,y)&=& \left(T_{w,z} - T_{w,0}-T_{0,z}+ T\right)_{w,0}(x,y)\\
       &=& T(2w+x,y+z) -T(2w+x,y)-T(w+x, y+z) +T(w+x,y)\\
       &=&  \left(T_{2w,z} - T_{2w,0} -T_{w,z} +T_{w,0}\right)(x,y)
   \end{eqnarray*}
   Further 
   \begin{eqnarray*}
       \left(\phi_T^{2w,z}\right)(x,y)&=& \left(T_{2w,z} - T_{2w,0}-T_{0,z}+ T\right)(x,y)\\
       &=& \left(\underbrace{T_{2w,z} - T_{2w,0} -T_{w,z} +T_{w,0}} + \underbrace{T_{w,z} -T_{w,0}- T_{0,z} +T}\right)(x,y)\\
       &=& \left(\phi_T^{w,z}\right)_{w,0}(x,y) + \left(\phi_T^{w,z}\right)(x,y)
   \end{eqnarray*}
   The other part can be proved similarly. This completes the proof.
\end{proof}
Next, we recall the notion of an invariant mean. 
\begin{definition}\label{inv mean} \cite{GNFA}
	A left \emph{invariant mean} in $X$ is a linear functional $M$ on $\ell^{\infty}(X)$ (the space of all bounded maps from $X$ to $\mathbb{R}$) such that:
	\begin{enumerate}
		\item $M(\textbf{1})=1$, where $\textbf{1}$ be the constant function on $X$ that takes value $1$.
		\item $M(f)\geq 0$ for all $f \geq 0$.
		\item $M(f_x)=M(f)$ for all $x \in X$, where $f_x(z)= f(x+z)$ for all $z \in X$.
	\end{enumerate}
\end{definition}
%\subsection{Invariant mean} 
Let $Y$ be a dual Banach space. It follows from \cite[p. 417]{GNFA} that an invariant mean $M$ in $X$ induces a norm-one linear operator $\mathcal{M} :\ell^{\infty}(X, Y) \to Y$ satisfying:
\begin{enumerate} 
    \item $\mathcal{M}(f_x)=\mathcal{M}(f)$ for all $f \in \ell^{\infty}(X, Y)$ and $x \in X$. 
    \item $\mathcal{M}(\hat{y})=y$ for all $y \in Y$. Here $\hat{y} \in \ell^{\infty}(X, Y)$ is given by $\hat{y}(x) = y$ for all $x \in X$. 
\end{enumerate} 
%whenever $Y$ is a dual space. Here $\hat{y}: X \to Y$ is given by $\hat{y}(x) = y$ for all $x \in X$ 
For the proof of the existence of such an invariant mean, we refer \cite[p. 417]{GNFA}. We call this $\mathcal{M}$ as a generalized invariant mean. Further, it is easy to verify that, in particular for $Y= \mathbb{R}$, $\mathcal{M}=M$, the invariant mean.

For any two normed linear spaces $X$ and $Y$, the product space $X \times Y$ is equipped with the norm $$\|(x,y)\|_{X \times Y}= \|x\|_X + \|y\|_Y.$$
In fact $\left(X \times Y, \|.\|_{X \times Y}\right)$ becomes a Banach space whenever $X$ and $Y$ are so. We will omit the subscript in the norm notation from this point onward. The norm should be understood in the context of the respective spaces. Now, we are in a position to state our
main result of this section.
\begin{theorem} \label{proj}
    Let $X, Y$  be normed linear spaces and $E$ be a dual space. Then $ Blin(X, Y; E)$ is complemented in $BLip_0(X, Y; E)$.
\end{theorem}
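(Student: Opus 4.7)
The plan is to construct an explicit projection $P:BLip_0(X,Y;E)\to Blin(X,Y;E)$ of norm one, using the generalized invariant mean machinery just recalled. Fix an invariant mean $M$ on $\ell^\infty(X\times Y)$ associated with the (abelian) additive group structure of $X\times Y$, and let $\mathcal{M}:\ell^\infty(X\times Y;E)\to E$ be the induced norm-one operator guaranteed by the hypothesis that $E$ is a dual space. For $T\in BLip_0(X,Y;E)$ define
\[
P(T)(w,z):=\mathcal{M}\!\left(\phi_T^{w,z}\right)\qquad\text{for }(w,z)\in X\times Y,
\]
where $\phi_T^{w,z}=T_{w,z}-T_{w,0}-T_{0,z}+T$ is the bounded function supplied by Lemma~\ref{lem1}(1). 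The whole proof then amounts to verifying, in order, that (i) $P(T)\in Blin(X,Y;E)$ with $\|P(T)\|\le BLip(T)$, (ii) $P$ is a bounded linear map of norm $\le 1$, and (iii) $P$ restricts to the identity on $Blin(X,Y;E)$.

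For (i), the bound is immediate from Lemma~\ref{lem1}(1) together with $\|\mathcal{M}\|=1$:
\[
\|P(T)(w,z)\|\le\|\phi_T^{w,z}\|_\infty\le BLip(T)\,\|w\|\,\|z\|.
\]
Additivity in the first slot is obtained by applying $\mathcal{M}$ to the identity in Lemma~\ref{lem1}(2) and invoking the invariance $\mathcal{M}(f_{w',0})=\mathcal{M}(f)$ to absorb the translate $(\phi_T^{w,z})_{w',0}$; additivity in the second slot is analogous. The identities in Lemma~\ref{lem1}(3)--(4) give $P(T)(-w,z)=-P(T)(w,z)$ and $P(T)(2w,z)=2P(T)(w,z)$, which together with additivity yield $\mathbb{Q}$-homogeneity in each variable. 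Combined with the already-established boundedness, this promotes $P(T)$ to a genuinely $\mathbb{R}$-bilinear, bounded map, so $P(T)\in Blin(X,Y;E)$ with $\|P(T)\|\le BLip(T)$.

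For (ii), linearity of $P$ follows from the observation that $T\mapsto\phi_T^{w,z}$ is linear (for each fixed $(w,z)$) and that $\mathcal{M}$ is linear; the norm bound from (i) then gives $\|P\|\le 1$. For (iii), observe that if $S\in Blin(X,Y;E)$ then a direct expansion shows that $\phi_S^{w,z}$ is the constant function $(x,y)\mapsto S(w,z)$; by property (2) of $\mathcal{M}$ (it sends a constant $\hat e$ to $e$), we get $P(S)(w,z)=S(w,z)$. Hence $P$ is a projection onto $Blin(X,Y;E)$, and since $P$ is non-zero and $\|P\|\le1$, in fact $\|P\|=1$. The conclusion $BLip_0(X,Y;E)=Blin(X,Y;E)\oplus\ker(P)$ as topological direct sum follows at once.

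The main technical hurdle is getting from Lemma~\ref{lem1}(2)--(4), which only supply additive and dyadic behavior of $P(T)$, to full $\mathbb{R}$-bilinearity: this is where the boundedness estimate from part (i) does the essential work, upgrading $\mathbb{Q}$-linearity to $\mathbb{R}$-linearity through continuity in each slot. A secondary subtlety worth double-checking is that the invariance of $\mathcal{M}$ under $X\times Y$-translations is strong enough to handle both the $(w',0)$ and $(0,z')$ translates appearing in Lemma~\ref{lem1}(2); this is indeed built into the construction of the mean on the product group.
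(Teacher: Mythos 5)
Your proposal is correct and follows essentially the same route as the paper: the same averaging formula $P(T)(w,z)=\mathcal{M}(\phi_T^{w,z})$, the same use of Lemma~\ref{lem1}(2)--(4) with translation invariance to get additivity and $\mathbb{Q}$-homogeneity, and the same verification that $\phi_S^{w,z}$ is the constant function $\widehat{S(w,z)}$ for bilinear $S$. The only cosmetic difference is in upgrading $\mathbb{Q}$-homogeneity to $\mathbb{R}$-homogeneity: you invoke the boundedness estimate directly, while the paper passes to the limit along rationals using the Lipschitz continuity of $T$ and the continuity of $\mathcal{M}$ --- the two arguments are interchangeable.
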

\begin{proof}
    Let $X, Y$  be normed linear spaces and $E$ be a dual space. Then $X \times Y$ is a normed linear space equipped with the norm described above. Consequently, by \ref{inv mean}, there exists a generalized invariant mean $\mathcal{M}$ on $\ell^{\infty}(X \times Y, E)$.    
    
    We now define $P (T)(w,z) := \mathcal{M}\left(\phi_T^{w,z}\right)$ for each $T \in BLip_0(X,Y; E)$ and  $(w,z) \in X \times Y$, where $\phi_T^{w,z}$ is defined in the preceding lemma. 
    
    Suppose $w, w' \in X$ and $z,z' \in Y$. Then by Lemma \ref{lem1}, we obtain 
    \begin{eqnarray*}
        P (T)(w+w',z) &=& \mathcal{M}\left(\phi_T^{w+w',z}\right)\\
        &=& \mathcal{M}\left(\left(\phi_T^{w,z}\right)_{w',0} + \phi_T^{w',z}\right) \\
        &=& \mathcal{M}\left(\left(\phi_T^{w,z}\right)_{w',0}\right) +\mathcal{M}\left( \phi_T^{w',z}\right) \ (\mbox{ as} \ \mathcal{M} \ \mbox{is linear})\\
        &=& \mathcal{M}\left(\phi_T^{w,z}\right) +\mathcal{M}\left( \phi_T^{w',z}\right) \ (\mbox{ as} \ \mathcal{M} \ \mbox{is translation invariant})\\
        &=& P (T)(w,z) + P (T)(w',z).
    \end{eqnarray*}
    Using similar argument we further conclude that $$P(T)(w,z+z')= P(T)(w,z)+ P(T)(w,z').$$
    Thus, $P(T)$ is additive in each variable. Now, using a standard technique of real analysis, we show that $P(T)$ is linear in each argument. 

For $n \in \mathbb{N}$, applying Lemma~\ref{lem1}$(4)$ and the linearity of 
$\mathcal{M}$, we obtain
$$\mathcal{M}\left(\phi_T^{nw,z}\right)= \mathcal{M}\left(\left(\phi_T^{(n-1)w,z}\right)_{w,0}\right) + \mathcal{M}(\phi_T^{(n-1)w,z}).$$
    At this stage, utilizing the translation invariance of 
$\mathcal{M}$ repeatedly, we derive $$\mathcal{M}\left(\phi_T^{nw,z}\right) = n \mathcal{M}\left(\phi_T^{w,z}\right).$$
Applying Lemma \ref{lem1}$(3)$ we deduce $\mathcal{M}\left(\phi_T^{nw,z}\right) = n \mathcal{M}\left(\phi_T^{w,z}\right)$ for any $n \in \mathbb{Z}$, that is. $P (T)(nw,z)=nP (T)(w,z)$. Let $\alpha=\frac{m}{n} \in \mathbb{Q}$. Then 
$$ mP (T)(w,z)=P (T)(m w,z)=P (T)(n\alpha w,z)= nP (T)(\alpha w,z).$$
Thus we have $P (T)(\alpha w,z)= \alpha P (T)(w,z)$ for any $\alpha \in \mathbb{Q}$. Again suppose $r \in \mathbb{R}$, then choose $(r_n) \subset \mathbb{Q}$ such that $r_n$ converges to $r$.

Next for any $(x,y) \in X \times Y$
\begin{eqnarray*}
    T_{rw,z}(x,y) &=& T(rw+x, y+z)\\
    &=& \lim_{n \to \infty} T(r_n w+x, y+z)   \ (\mbox{as} \ T \ \mbox{is Lipschitz in both co-ordinates})\\
    &=& \lim_{n \to \infty} T_{r_n w,z}(x,y).
\end{eqnarray*}
Further using continuity of $\mathcal{M}$ we get $$\mathcal{M}\left(\phi_T^{rw,z}\right)= \lim_{n \to \infty} \mathcal{M}\left(\phi_T^{r_n w,z}\right) = \lim_{n \to \infty} P(T)(r_n w,z)= \lim_{n \to \infty} r_n P(T)(w,z)= rP(T)(w,z).$$
Therefore, $P(T)$ is linear in the first variable. Likewise, it can also be shown to be linear in the second variable. Thus the map $$P: (BLip_0(X,Y; E),BLip(\cdot)) \xrightarrow{} (Blin(X,Y; E),\|\cdot\|)$$ given by $$P (T)(w,z) = \mathcal{M}\left(\phi_T^{w,z}\right)$$ for each $T \in BLip_0(X,Y; E)$ and  $(w,z) \in X \times Y$; is well defined with 
$$\|P (T)(w,z)\| = \|\mathcal{M}\left(\phi_T^{w,z}\right)\| \leq \|\phi_T^{w,z}\|_{\infty} \leq BLip(T)\|w\|\|z\|.$$
 This shows that $\|P(T)\| \leq BLip(T)$. Again, from the linearity of $\mathcal{M}$, it easily follows that $P$ is linear.
 We show that $P(T)=T$ for any $T \in Blin(X, Y; E)$. Suppose $T \in Blin(X, Y; E)$. Then for any $(x, y) \in X \times Y$
 $$\phi_T^{w,z}(x,y)= T(w+x,z+y) - T(w+x,y) -T(x,z+y) + T(x,y)=T(w,z).$$
 Moreover $\phi_T^{w,z}= \widehat{T(w,z)}$ and hence $P(T)(w,z)=\mathcal{M}(\phi_T^{w,z})=\mathcal{M}(\widehat{T(w,z)})= T(w,z)$. 

 Therefore, $P(T) = T$ for all $T \in Blin(X, Y; E)$ so that $P$ is a norm one linear projection from $BLip_0(X, Y; E)$ onto $Blin(X, Y; E)$. In other words, $Blin(X, Y; E)$ is complemented in $BLip_0(X, Y; E)$ whenever $E$ is a dual Banach space. 
\end{proof}
\begin{corollary}
	Let $X$ and $Y$ be Banach spaces with $E$ be a dual space. Then $ BLip_0(X, Y; E)$ is topologically isomorphic to $Blin(X, Y; E) \oplus_1 \ker(P)$.
\end{corollary}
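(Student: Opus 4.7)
The plan is to exhibit an explicit topological isomorphism using the projection $P$ produced in Theorem \ref{proj}. Since $P$ is a norm-one linear projection from $BLip_0(X,Y;E)$ onto $Blin(X,Y;E)$, we have $P^2 = P$, so every $T \in BLip_0(X,Y;E)$ admits the canonical decomposition
\[
T = P(T) + \bigl(T - P(T)\bigr),
\]
where $P(T) \in Blin(X,Y;E)$ and $T - P(T) \in \ker(P)$ because $P(T - P(T)) = P(T) - P^2(T) = 0$. This suggests defining
\[
\Phi \colon BLip_0(X,Y;E) \longrightarrow Blin(X,Y;E) \oplus_1 \ker(P), \qquad \Phi(T) = \bigl(P(T),\, T - P(T)\bigr),
\]
with proposed inverse $\Psi(S,U) = S + U$.

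The first step is to check linearity of $\Phi$ and $\Psi$, which follows directly from the linearity of $P$. Next, I would verify that $\Psi \circ \Phi = \operatorname{id}$ on $BLip_0(X,Y;E)$ (by the decomposition above) and that $\Phi \circ \Psi = \operatorname{id}$ on $Blin(X,Y;E) \oplus_1 \ker(P)$; the latter uses that $P(S) = S$ for $S \in Blin(X,Y;E)$ (established in Theorem \ref{proj}) and $P(U) = 0$ for $U \in \ker(P)$, so that $P(S+U) = S$ and $(S+U) - P(S+U) = U$.

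The continuity estimates are then routine. For $\Psi$, the triangle inequality gives
\[
BLip\bigl(\Psi(S,U)\bigr) = BLip(S + U) \leq \|S\| + BLip(U) = \|(S,U)\|_1,
\]
so $\Psi$ is a contraction. For $\Phi$, using $\|P\| = 1$ from Theorem \ref{proj},
\[
\|\Phi(T)\|_1 = \|P(T)\| + BLip\bigl(T - P(T)\bigr) \leq BLip(T) + BLip(T) + \|P(T)\| \leq 3\, BLip(T),
\]
so $\Phi$ is bounded. Consequently $\Phi$ is a continuous linear bijection with continuous linear inverse, i.e.\ a topological isomorphism.

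Honestly, there is no substantive obstacle here; the corollary is the standard direct-sum decomposition associated with any bounded projection on a Banach space, and the only task is to record the explicit maps and the elementary norm bounds. The one minor point to be careful about is that the target carries the $\ell^1$ direct-sum norm, so one should not expect an isometry, only a topological isomorphism, as the statement asserts.
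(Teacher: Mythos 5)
Your proposal is correct and follows essentially the same route as the paper: the same decomposition $T \mapsto (P(T),\, T - P(T))$ with inverse $(S,U) \mapsto S+U$, and the same norm estimates (the contraction bound for the sum map and the $3\,BLip(T)$ bound for the decomposition). The only difference is cosmetic: you spell out the bijectivity checks more explicitly than the paper does, and you correctly state the key inequality as $\|P(T)\| + BLip(T - P(T)) \leq 3\,BLip(T)$, where the paper's displayed version contains an evident typo (an equality sign where an inequality is meant).
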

\begin{proof}
By Theorem \ref{proj}, $P$ is a (norm one) projection with $P(BLip_0(X, Y; E)) = Blin(X, Y; E)$. Thus $BLip_{0}(X, Y; E) = Blin(X, Y; E) \oplus \ker(P)$ as vector spaces. Here the linear isomorphism is given by $$f \in BLip_0(X, Y: E) \mapsto  (P(T), (T - P(T))) \in Blin(X, Y: E) \oplus \ker(P).$$ 
Moreover, $\|\cdot\|_1$ is equivalent to $BLip(.)$ on $BLip_0(X, Y; E)$, where 
$$\|T\|_1 = \|P(T)\| + BLip(T - P(T))$$ 
for all $T \in BLip_0(X, Y;E)$. In fact, 
$$BLip(T) = \Vert P(T) \Vert + BLip(T - P(T)) \leq BLip(T) + 2 \Vert P(T) \Vert \leq 3 BLip(T),$$  
if $T \in BLip_{0}(X, Y; E)$.
\end{proof}
\begin{remark} \label{1st quo}
    We note that $\ker(P)$ is topologically isomorphic to $\bigslant{BLip_{0}(X, Y; E)}{Blin(X, Y; E) }$. In fact, $Id_{BLip_0(X,Y; E)} - P$ is a bounded linear surjective projection form $BLip_0(X,Y; E)$ onto  $\ker(P)$ with kernel $Blin(X, Y; E)$. 
\end{remark} 
\section{Vector valued dual action} \label{sec3}
Here, we define an $E$-valued dual action between 
$BLip_0(X, Y; E)$ and 
$F(X)\hat{\otimes}_{\pi}F(Y)$, which will serve as a foundational setup for the results presented in the following sections.
 Let us first recall the following two theorems:
\begin{theorem} \cite[Theorem~2.6]{TLOI} 
For every \emph{two-Lipschitz} operator $T \in BLip_0(X, Y; E)$ there exists a unique bilinear mapping $T_B : F(X) \times F(Y) \xrightarrow{} E$ satisfying $T_B(\delta_x, \delta_y) = T(x,y)$ and
$T= T_B \circ (\delta_X , \delta_Y ) : X \times Y \xrightarrow{(\delta_X , \delta_Y)} F(X) \times F(Y) \xrightarrow{T_B} E$.
Furthermore $BLip (T) = \|T_B\|$. The bilinear mapping $T_B$ is called bi-linearization of the \emph{two-Lipschitz}
operator $T$.

This result implies that the space $(BLip_0(X, Y; E), BLip(\cdot))$ is isometrically isomorphic to $(Blin(F(X), F(Y); E), \|\cdot\|)$
\end{theorem}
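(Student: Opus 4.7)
The plan is to construct $T_B$ by applying the universal property of the Lipschitz-free space twice, once in each coordinate, and then to verify that the resulting bounded bilinear map has exactly the norm $BLip(T)$. The two-Lipschitz condition, evaluated with one corner pinned to zero, will deliver precisely the Lipschitz estimates needed at each step.

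First, I would fix $y \in Y$ and consider $f_y \colon X \to E$ defined by $f_y(x) = T(x,y)$. Since $T(x,0) = 0$ for every $x \in X$, the two-Lipschitz estimate reads
$$\|T(x,y) - T(x',y)\| = \|T(x,y) - T(x',y) - T(x,0) + T(x',0)\| \le BLip(T)\,\|x-x'\|\,\|y\|,$$
so $f_y$ is Lipschitz, vanishes at $0$, and $Lip(f_y) \le BLip(T)\|y\|$. By the universal property of $F(X)$, there is a unique $\widetilde{f_y} \in L(F(X), E)$ with $\widetilde{f_y} \circ \delta_X = f_y$ and $\|\widetilde{f_y}\| = Lip(f_y)$.

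Next I would view the assignment $\Phi \colon Y \to L(F(X), E)$, $y \mapsto \widetilde{f_y}$, and show it is itself Lipschitz with $\Phi(0) = 0$. Applying the two-Lipschitz inequality with $x' = 0$ gives $\|T(x,y) - T(x,y')\| \le BLip(T)\|x\|\|y-y'\|$, and hence $\|(\Phi(y) - \Phi(y'))(\delta_x)\| \le BLip(T)\|\delta_x\|_{F(X)}\|y-y'\|$; by linearity and density of $\mathrm{span}\{\delta_x : x \in X\}$ in $F(X)$, this estimate extends to all of $F(X)$, yielding $\|\Phi(y) - \Phi(y')\|_{L(F(X),E)} \le BLip(T)\|y-y'\|$. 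A second application of the universal property, now for $F(Y)$, produces a unique $\widetilde{\Phi} \in L(F(Y), L(F(X), E))$ with $\widetilde{\Phi}(\delta_y) = \Phi(y)$ and $\|\widetilde{\Phi}\| \le BLip(T)$. Setting $T_B(\xi, \eta) := \widetilde{\Phi}(\eta)(\xi)$ then gives a bounded bilinear map satisfying $T_B(\delta_x, \delta_y) = T(x,y)$ and $\|T_B\| \le \|\widetilde{\Phi}\| \le BLip(T)$.

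For the reverse inequality I would exploit the isometric embedding property $\|\delta_x - \delta_{x'}\|_{F(X)} = \|x-x'\|_X$ (and similarly for $F(Y)$) together with bilinearity to write
$$T_B(\delta_x - \delta_{x'},\, \delta_y - \delta_{y'}) = T(x,y) - T(x',y) - T(x,y') + T(x',y'),$$
whence $BLip(T) \le \|T_B\|$. Uniqueness is automatic: any bounded bilinear map on $F(X) \times F(Y)$ is determined by its values on the atoms $\{(\delta_x, \delta_y)\}$ via separate linearity combined with density of the linear spans. The only technical point that deserves some care is the second step, namely verifying that $y \mapsto \widetilde{f_y}$ is Lipschitz in the full operator norm rather than merely pointwise on the atoms; this reduces to the observation that the bound $BLip(T)\|y-y'\|$ obtained on atoms automatically extends by linearity and continuity to all of $F(X)$. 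Once that is in place, the rest is routine bookkeeping with the universal property.
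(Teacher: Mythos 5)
This statement is quoted from \cite[Theorem~2.6]{TLOI}; the paper gives no proof of its own, so there is nothing internal to compare against. Your overall strategy --- linearize in the first variable via the universal property of $F(X)$, show the resulting map $y\mapsto \widetilde{f_y}$ is Lipschitz into $L(F(X),E)$, linearize again over $F(Y)$, and then get the reverse norm inequality from the molecules $(\delta_x-\delta_{x'})/\|x-x'\|$ --- is the natural and standard one, and the first, third and fourth steps, as well as the uniqueness argument, are fine.

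However, the second step, which you yourself single out as the delicate point, has a genuine gap. From the two-Lipschitz inequality with $x'=0$ you obtain $\|(\Phi(y)-\Phi(y'))(\delta_x)\|\le BLip(T)\,\|\delta_x\|\,\|y-y'\|$ for every atom $\delta_x$, and you then claim this ``extends by linearity and density'' to the operator-norm bound on all of $F(X)$. It does not: a bounded linear operator satisfying $\|S(\delta_x)\|\le C\|\delta_x\|$ for all $x$ need not satisfy $\|S\|\le C$, because the triangle inequality only controls $\|S(\sum a_i\delta_{x_i})\|$ by $C\sum |a_i|\,\|x_i\|$, and this sum can vastly exceed $\|\sum a_i\delta_{x_i}\|_{F(X)}$ (the unit ball of $F(X)$ is the closed convex hull of the molecules $(\delta_x-\delta_{x'})/\|x-x'\|$, not of the normalized atoms $\delta_x/\|x\|$). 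The repair is short but uses a different specialization of the hypothesis: observe that $\Phi(y)-\Phi(y')$ is, by uniqueness of linearizations, the linearization of the map $h_{y,y'}\colon x\mapsto T(x,y)-T(x,y')$, which vanishes at $0$ and whose Lipschitz constant is at most $BLip(T)\|y-y'\|$ by the \emph{full} two-Lipschitz inequality applied to general pairs $x,x'$ (not just $x'=0$). The isometric form of the universal property, $\|\widetilde{h_{y,y'}}\|=Lip(h_{y,y'})$, then yields $\|\Phi(y)-\Phi(y')\|\le BLip(T)\|y-y'\|$, after which your remaining steps go through unchanged.
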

\begin{theorem} $($\cite[Theorem~2.9]{ITTPOBS}, \cite[Remark~2.7]{TLOI}$)$
    The bilinear operator $T_B$ admits a linearization
$(T_B)_L: F(X)\hat{\otimes}_{\pi}F(Y) \xrightarrow{} E$ 
satisfies
$T= T_B \circ (\delta_X , \delta_Y ) =  (T_B)_L \circ\sigma_2 \circ (\delta_X , \delta_Y )$,
where $\sigma_2: F(X) \times F(Y) \xrightarrow{}F(X)\hat{\otimes}_{\pi}F(Y)$ is the canonical bilinear operator defined by $\sigma_2(\mu,\nu)=\mu \otimes \nu$. In addition, we have
$BLip(T) = \|T_B\|= \|(T_B)_L\|$
The linear operator $(T_B)_L$ is referred to as the linearization of the \emph{two-Lipschitz} operator $T$. For the
simplification, we write $T_L$ instead of $(T_B)_L$.
\end{theorem}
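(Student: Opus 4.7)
The plan is to apply the universal property of the projective tensor product to the bilinear map $T_B: F(X) \times F(Y) \to E$ produced by the preceding theorem. The key ingredient is the standard fact that for Banach spaces $V, W, E$, the canonical bilinear map $\sigma_2$ induces an isometric isomorphism between $Blin(V, W; E)$ and $L(V \hat{\otimes}_{\pi} W, E)$, so every bounded bilinear operator factors uniquely through $\sigma_2$ as a bounded linear operator on the projective tensor product with matching norm. Granting this, the theorem follows by direct composition; the substance lies in the construction of the linearization and the norm analysis.

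First, I would define $(T_B)_L$ on the algebraic tensor product $F(X) \otimes F(Y)$ by sending an elementary tensor $\mu \otimes \nu$ to $T_B(\mu, \nu)$ and extending by linearity; bilinearity of $T_B$ together with the universal property of the algebraic tensor product ensures this is well-defined on finite sums. For boundedness with respect to $\|\cdot\|_{\pi}$, given any representation $u = \sum_i \mu_i \otimes \nu_i$, I would estimate
$$\|(T_B)_L(u)\| \leq \sum_i \|T_B(\mu_i, \nu_i)\| \leq \|T_B\| \sum_i \|\mu_i\| \|\nu_i\|,$$
and pass to the infimum over all such representations to obtain $\|(T_B)_L(u)\| \leq \|T_B\| \|u\|_{\pi}$. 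Hence $(T_B)_L$ extends uniquely, by density and continuity, to a bounded linear operator on the completion $F(X) \hat{\otimes}_{\pi} F(Y)$ with $\|(T_B)_L\| \leq \|T_B\|$. The reverse inequality $\|T_B\| \leq \|(T_B)_L\|$ is obtained by evaluating on elementary tensors, using the bound $\|\mu \otimes \nu\|_{\pi} \leq \|\mu\| \|\nu\|$ which is immediate from the definition of the projective norm.

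Finally, the identity $T_B = (T_B)_L \circ \sigma_2$ is built into the construction on elementary tensors and extends to all of $F(X) \times F(Y)$ by bilinearity and continuity; composing with the relation $T = T_B \circ (\delta_X, \delta_Y)$ from the preceding theorem yields the desired factorization $T = (T_B)_L \circ \sigma_2 \circ (\delta_X, \delta_Y)$, together with the chain of equalities $BLip(T) = \|T_B\| = \|(T_B)_L\|$. The main technical step, though essentially routine, is the well-definedness of the linearization on the algebraic tensor product and the transfer of the bilinear norm into the projective norm estimate; since both ingredients belong to the classical theory of projective tensor products, there is no genuine obstacle here beyond careful bookkeeping.
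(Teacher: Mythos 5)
The paper does not prove this statement itself; it is quoted from \cite[Theorem~2.9]{ITTPOBS} and \cite[Remark~2.7]{TLOI}, and your argument is precisely the standard proof of the universal property of the projective tensor product that those references contain: define $(T_B)_L$ on elementary tensors, bound it by taking the infimum over representations of $\|\cdot\|_{\pi}$, extend by density, and recover $\|T_B\|\le\|(T_B)_L\|$ on elementary tensors. Your proposal is correct and coincides with the intended (cited) argument; the only cosmetic remark is that $T_B=(T_B)_L\circ\sigma_2$ needs no ``extension by bilinearity and continuity,'' since $\sigma_2(\mu,\nu)=\mu\otimes\nu$ is already an elementary tensor for every pair $(\mu,\nu)$.
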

By combining these two results, we conclude that the space $(BLip_0(X, Y; E), BLip(\cdot))$ is isometrically isomorphic to $L\left(F(X)\hat{\otimes}_{\pi}F(Y), E\right)$ via the correspondence $T \mapsto T_L$. This encourages us to consider the following vector-valued duality, which will play a crucial role in the results of this section.

 \begin{proposition}
    For any Banach spaces $X, Y$ and $E$, define
       $$\langle \cdot,\cdot \rangle_{E} : BLip_{0}(X, Y; E) \times F(X)\hat{\otimes}_{\pi}F(Y) \xrightarrow{} E$$ given by $$\langle T,\xi \rangle_{E}:= T_L(\xi)$$ for all $T \in BLip_{0}(X, Y; E)$ and $\xi \in F(X)\hat{\otimes}_{\pi}F(Y)$. Then $\langle \cdot,\cdot \rangle_{E}$ is bilinear and non-degenerate.
       \end{proposition}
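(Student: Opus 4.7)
The plan is to dispatch bilinearity immediately and then handle non-degeneracy in its two directions using the isometric isomorphism $T \mapsto T_L$ between $BLip_0(X,Y;E)$ and $L(F(X)\hat{\otimes}_{\pi}F(Y), E)$ recalled just before the proposition.

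For bilinearity, I would first observe that for fixed $\xi$, the map $T \mapsto T_L(\xi)$ is linear because $T \mapsto T_L$ is a linear correspondence (this is essentially the content of the isometric identification cited; if needed, one checks on elementary tensors $\delta_x \otimes \delta_y$ that $(S+\lambda T)_L(\delta_x \otimes \delta_y) = (S+\lambda T)(x,y) = S(x,y) + \lambda T(x,y) = S_L(\delta_x \otimes \delta_y) + \lambda T_L(\delta_x \otimes \delta_y)$, and then extend by density and continuity). For fixed $T$, linearity in $\xi$ is immediate since $T_L$ is by definition a bounded linear operator on $F(X)\hat{\otimes}_{\pi}F(Y)$.

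For non-degeneracy in the first slot, suppose $\langle T, \xi \rangle_E = T_L(\xi) = 0$ for every $\xi \in F(X)\hat{\otimes}_{\pi}F(Y)$. Specializing to $\xi = \delta_x \otimes \delta_y$ gives $T(x,y) = T_B(\delta_x,\delta_y) = T_L(\delta_x \otimes \delta_y) = 0$ for every $(x,y) \in X \times Y$, so $T = 0$ in $BLip_0(X,Y;E)$.

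For non-degeneracy in the second slot, suppose $T_L(\xi) = 0$ for every $T \in BLip_0(X,Y;E)$. Since $T \mapsto T_L$ is surjective onto $L(F(X)\hat{\otimes}_{\pi}F(Y), E)$, this means $S(\xi) = 0$ for every bounded linear operator $S: F(X)\hat{\otimes}_{\pi}F(Y) \to E$. Fix any nonzero $e \in E$; then for each $\varphi \in \bigl(F(X)\hat{\otimes}_{\pi}F(Y)\bigr)^{*}$ the rank-one operator $S_{\varphi,e}(\eta) := \varphi(\eta)\, e$ lies in $L(F(X)\hat{\otimes}_{\pi}F(Y), E)$, so $\varphi(\xi)\, e = 0$, forcing $\varphi(\xi) = 0$. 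As $\varphi$ ranges over the whole dual, the Hahn--Banach theorem gives $\xi = 0$. The main (and only minor) obstacle I anticipate is making the surjectivity of $T \mapsto T_L$ explicit enough to justify the rank-one trick; this is handled by the two theorems quoted immediately before the proposition, so the argument should be short.
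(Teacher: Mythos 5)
Your proposal is correct and follows essentially the same route as the paper: bilinearity from the linearity of the correspondence $T \mapsto T_L$ and of each $T_L$, non-degeneracy in the first slot by evaluating at elementary tensors $\delta_x \otimes \delta_y$, and non-degeneracy in the second slot via Hahn--Banach combined with a rank-one operator $\eta \mapsto \varphi(\eta)e$. The only cosmetic difference is that the paper picks a single norming functional $S$ with $S(\xi)=\|\xi\|$ and builds one rank-one operator from it, whereas you let $\varphi$ range over the whole dual; these are the same argument.
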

       \begin{proof}
            The existence of a unique map $T_L$, corresponding to $T\in BLip_{0}(X, Y; E)$, provides that $\langle \cdot,\cdot \rangle_{E}$ is linear in the first coordinate, and the linearity of $T_L$ gives the linearity in the second coordinate of $\langle \cdot,\cdot \rangle_{E}$.
       Also the following norm conditions hold:$$\sup\limits_{\xi \in F(X)\hat{\otimes}_{\pi}F(Y), \|\xi\|\leq 1} \|T_L(\xi)\|= \|T_L\|= BLip(T)\ \mbox{and} \ \sup\limits_{BLip(T)\leq 1} \|T_L(\xi)\|=\sup\limits_{\|T_L\|\leq 1} \|T_L(\xi)\|=\|\xi\|. $$
Now we show that $\langle \cdot,\cdot \rangle_{E}$ is non-degenerate.
    Let $T \in BLip_{0}(X, Y; E)$ and $T_L(\xi)=0$ for all $\xi \in F(X)\hat{\otimes}_{\pi}F(Y)$. This gives $T_L(\delta_{x} \otimes \delta_y)=0$ for all $x \in X$ and $y \in Y$. That is, $T(x,y)=0$ for all $x \in X$ and $y \in Y$. Hence $T =0$.
    
     Again let $\xi \in F(X)\hat{\otimes}_{\pi}F(Y)$ be such that $T_L(\xi) =0$ for all $T \in BLip_{0}(X, Y; E)$. That is $T_L(\xi) =0$ for all $T_L \in L\left(F(X)\hat{\otimes}_{\pi}F(Y), E\right)$. Then by Hahn-Banach theorem there exists $S \in L\left(F(X)\hat{\otimes}_{\pi}F(Y), \mathbb{R}\right)$ such that $S(\xi) = \|\xi\|$. Now define $\tilde{S} (\gamma) = S(\gamma) e$ for all $\gamma \in F(X)\hat{\otimes}_{\pi}F(Y)$ and some unit vector $e \in E$. Then $\tilde{S} \in L\left(F(X)\hat{\otimes}_{\pi}F(Y), E\right)$ and hence it follows that $ \tilde{S}(\xi)=0$. That is $ \|\xi \|=0$. Thus, $\xi =0$. This completes the proof.
\end{proof}
For $\mathcal{A} \subset BLip_{0}(X, Y; E)$ we define $\mathcal{A}^{\diamondsuit}:=\{\xi \in F(X)\hat{\otimes}_{\pi}F(Y): T_L(\xi)=0 \ \ \forall ~T \in \mathcal{A}\}$. Then $\mathcal{A}^{\diamondsuit} = \bigcap\limits_{T \in \mathcal{A}} \ker(T_L)$ is a closed subspace of $F(X)\hat{\otimes}_{\pi}F(Y)$.

Further, for $\mathcal{D} \subset F(X)\hat{\otimes}_{\pi}F(Y)$ we consider the space $$ ^{\diamondsuit}\mathcal{D}:= \{ T \in BLip_{0}(X, Y; E) : T_L(\xi)=0 ~ \forall \xi \in \mathcal{D}\}.$$ Using the fact $BLip(T) = \|T_L\|$, we can easily verify that it is also a closed subspace of $BLip_{0}(X, Y; E)$.

\section{some quotient space}\label{sec4}
Building on the dual action established in the preceding section, we present some of the main results, including identifying the quotient space in this context. Additionally, we provide a necessary and sufficient condition for a \emph{two-Lipschitz} map to be bilinear.
\begin{proposition} \label{quotient of Blip}
    Let $\mathcal{D}$ be a subspace of $F(X)\hat{\otimes}_{\pi}F(Y)$ and $E$ be an injective Banach space. Then $\bigslant{BLip_{0}(X, Y;E)}{^{\diamondsuit}\mathcal{D} } \cong  L(\mathcal{D}, E)$.
\end{proposition}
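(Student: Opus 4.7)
The plan is to reduce the statement to a pure operator-theoretic fact via the isometric identification $T \mapsto T_L$ between $BLip_0(X, Y; E)$ and $L(F(X)\hat{\otimes}_{\pi}F(Y), E)$ that was recalled immediately before this proposition. Under this identification, $^{\diamondsuit}\mathcal{D}$ corresponds exactly to the subspace of those $S \in L(F(X)\hat{\otimes}_{\pi}F(Y), E)$ that vanish on $\mathcal{D}$, so the proposition becomes the assertion that restriction to $\mathcal{D}$ induces an isometric isomorphism $L(F(X)\hat{\otimes}_{\pi}F(Y), E) / \{S : S|_{\mathcal{D}} = 0\} \cong L(\mathcal{D}, E)$.

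Concretely, I would define $\Phi : BLip_0(X, Y; E) \to L(\mathcal{D}, E)$ by $\Phi(T) := T_L|_{\mathcal{D}}$. Since $\|T_L|_{\mathcal{D}}\| \leq \|T_L\| = BLip(T)$, the map $\Phi$ is a linear contraction, and its kernel is, by the very definition of $^{\diamondsuit}\mathcal{D}$, precisely $^{\diamondsuit}\mathcal{D}$. Therefore $\Phi$ factors through the quotient as an injective linear contraction
$$\tilde{\Phi} : \bigslant{BLip_{0}(X, Y; E)}{^{\diamondsuit}\mathcal{D}} \to L(\mathcal{D}, E).$$
The remaining work is to prove that $\tilde{\Phi}$ is surjective and norm-preserving.

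Here is where the injectivity of $E$ enters. Given any $R \in L(\mathcal{D}, E)$, first extend $R$ by continuity to $\overline{\mathcal{D}}$ without increasing its norm, and then invoke the defining extension property of an injective Banach space to obtain $\tilde{R} \in L(F(X)\hat{\otimes}_{\pi}F(Y), E)$ with $\tilde{R}|_{\mathcal{D}} = R$ and $\|\tilde{R}\| = \|R\|$. Via the isometric identification $T \mapsto T_L$, pick the unique $T \in BLip_0(X, Y; E)$ with $T_L = \tilde{R}$; then $\Phi(T) = R$ and $BLip(T) = \|\tilde{R}\| = \|R\|$. Surjectivity of $\tilde{\Phi}$ is immediate, and the quotient-norm estimate yields
$$\|\tilde{\Phi}(T + {}^{\diamondsuit}\mathcal{D})\| = \|R\| = BLip(T) \geq \|T + {}^{\diamondsuit}\mathcal{D}\|,$$
which, combined with the contraction property $\|\tilde{\Phi}\| \leq 1$, shows $\tilde{\Phi}$ is an isometry.

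The only non-formal ingredient is the use of injectivity of $E$ to guarantee a norm-preserving extension of a bounded operator from a subspace; this is the single step where the hypothesis on $E$ is essential and is where the argument would fail for a general Banach space. Everything else is a routine chase through the isometric identification $BLip_0(X, Y; E) \cong L(F(X)\hat{\otimes}_{\pi}F(Y), E)$ established in the previous section.
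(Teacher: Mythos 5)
Your proposal is correct and follows essentially the same route as the paper: both define the map $T \mapsto T_L|_{\mathcal{D}}$, identify its kernel with ${}^{\diamondsuit}\mathcal{D}$, and use the norm-preserving extension property of the injective space $E$ both for surjectivity and for the reverse norm inequality giving the isometry. The only cosmetic difference is that you bypass the paper's (unneeded) appeal to the open mapping theorem by reading the isometry directly off the norm-preserving extension of a given $R \in L(\mathcal{D},E)$.
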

\begin{proof}
For a fix $T \in BLip_0(X, Y; E)$,  we define $\theta_T : \mathcal{D} \to E$ given by $\theta_T(\xi) = T_L(\xi)$, for all $\xi \in \mathcal{D}$. Then, $\theta_T$ is linear. Also, for any $\xi \in \mathcal{D}$, we have 
		$$\|\theta_T(\xi)\|=\|T_L(\xi)\|\leq \|T_L\|\|\xi\|= BLip(T)\|\xi\|.$$ 
	Thus $ \theta_T \in L(\mathcal{D}, E)$  for any $T \in BLip_0(X, Y; E)$. 

We define $\Theta : BLip_0(X, Y; E) \to L(\mathcal{D}, E)$ given by $\Theta(T)= \theta_T$. It is routine to check that $\Theta$ is linear. We show that $\Theta$ is also surjective. Let $S \in L(\mathcal{D}, E)$.  Since $E$ is injective there exists $\widetilde{S} \in L(F(X)\hat{\otimes}_{\pi}F(Y), E)$ such that $\widetilde{T}\big|_{\mathcal{D}}=S$ with $\|\widetilde{S}\|=\|S\|$. Thus $\widetilde{S} \circ \sigma_2 \circ (\delta_X, \delta_Y) \in BLip_0(X, Y; E)$. Furthermore, the existence of a unique linearisation map corresponding to 
$T$ implies that $T_L= \widetilde{S}$. Thus, $\Theta$ is surjective.

	Also, 
	\begin{eqnarray*}
		\ker(\Theta)&=&\left\{T\in BLip_{0}(X, Y; E): \theta_T=0 \right\}\\
		&=&\left\{T\in BLip_{0}(X, Y; E): 0 = \theta_T (\xi)= T_L(\xi) ~ \mbox{for all}~ \xi \in \mathcal{D}\right\}\\
		&=& \ ^{\diamondsuit}\mathcal{D}.
	\end{eqnarray*} 
	So by fundamental theorem of linear algebra $\Theta$ induce a linear isomorphism (which we again denote by $\Theta$): 
	$$\Theta : \bigslant{BLip_{0}(X, Y; E)}{^{\diamondsuit}\mathcal{D} } \to L(\mathcal{D}, E)$$
	given by $\Theta(T+ \ ^{\diamondsuit}\mathcal{D})(\xi)=T_L(\xi)$ for all $\xi \in \mathcal{D}$.
	
	Now, for any $T \in BLip_0(X, Y; E)$ and $\xi \in \mathcal{D}$
	\begin{eqnarray*} \label{cont of theta1}
		\|\Theta(T+ \ ^{\diamondsuit}\mathcal{D})(\xi)\| &=& \|T_L(\xi)\| \\ 
		&=& \inf\limits_{U \in \ ^{\diamondsuit}\mathcal{D} }\|(T+U)_L(\xi)\| \\ 
		&\le& \|\xi\|\inf\limits_{U \in \ ^{\diamondsuit}\mathcal{D} }BLip(T+U) \\ 
		&=& \|\xi\| \|T+ \ ^{\diamondsuit}\mathcal{D}\|.
	\end{eqnarray*}
	Therefore, $\Theta$ is a surjective contractive linear isomorphism. So, by the open mapping theorem, $\Theta^{-1}$ exists and is a bounded linear surjective isomorphism. 

	Fix $S \in L(\mathcal{D}, E)$ and let $S_1$ and $S_2$ be any two extensions of $S$ in $L(F(X)\hat{\otimes}_{\pi}F(Y), E)$. If $\xi \in \mathcal{D}$, then 
	$$\left(\Theta(S_1 \circ \sigma_2 \circ (\delta_X, \delta_Y)) - \Theta(S_2 \circ \sigma_2 \circ (\delta_X, \delta_Y))\right)(\xi) = (S_1)_L(\xi)-(S_2)_L(\xi)=S(\xi)-S(\xi)=0.$$ Thus $S_1 \circ \sigma_2 \circ (\delta_X, \delta_Y) + \ ^{\diamondsuit}\mathcal{D} = S_2 \circ \sigma_2 \circ (\delta_X, \delta_Y) + \ ^{\diamondsuit}\mathcal{D}$ and hence $\widetilde{S} \circ \delta_{X}^{Y}+ \ ^{\diamondsuit}\mathcal{D}$ is uniquely determined by $S$. Now it follows that $\Theta^{-1}(T)=\widetilde{T} \circ \sigma_2 \circ (\delta_{X}, \delta_{Y})+ \ ^{\diamondsuit}\mathcal{D}$ for all $T \in L(\mathcal{D}, E)$, where $\widetilde{T} \in L(F(X)\hat{\otimes}_{\pi}F(Y), E)$ is a norm preserving extension of $T$. 	Since 
	$$\|\Theta^{-1}(T)\|=\|\widetilde{T} \circ \sigma_2 \circ (\delta_{X}, \delta_{Y})+ \ ^{\diamondsuit}\mathcal{D}\|\leq BLip(\widetilde{T} \circ \sigma_2 \circ (\delta_{X}, \delta_{Y})) \leq \|T\|,$$ 
	$\Theta^{-1}$ is also a contraction. Now, it is easy to conclude that $\Theta$ is surjective linear isometry so that $\bigslant{BLip_{0}(X, Y; E)}{^{\diamondsuit}\mathcal{D} }$ is isometrically isomorphic to $L(\mathcal{D}, E)$.
\end{proof}
\begin{remark}
    In case if we consider $\mathcal{D}= F(X)\hat{\otimes}_{\pi}F(Y)$, we get back the result  
    $$(BLip_0(X, Y; E), BLip(.))\cong L\left(F(X)\hat{\otimes}_{\pi}F(Y), E\right).$$
\end{remark}
The following result states that every element of the Lipschitz free space can be represented as an infinite series. It is a modified proof in our set up whose proof follows similarly as \cite[Lemma 2.1]{SAEPILFS}. This fact will be used throughout the article without further reference.
\begin{lemma}
    Let $\mu \in F(X)$. Then for every $\epsilon >0$, there exist sequences $(a_n)$ in $\mathbb{R}$ and $(x_n)$ in $X$ such that $\mu = \sum\limits_{n=1}^{\infty}a_n \delta_{x_n}$ and $\sum\limits_{n=1}^{\infty}|a_n| \|\delta_{x_n}\| = \sum\limits_{n=1}^{\infty}|a_n| \|x_n\| < \|\mu\|+\epsilon$.
\end{lemma}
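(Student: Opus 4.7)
The argument is a standard telescoping construction. The key auxiliary fact I would invoke is the Arens--Eells infimum characterization of $\|\cdot\|$ on finite atomic elements of $F(X)$: for any finite linear combination $\nu = \sum_{k=1}^{N} b_k \delta_{y_k}$ and any $\eta > 0$, one can find an alternate finite representation $\nu = \sum_{k=1}^{M} b_k' \delta_{y_k'}$ with $\sum_{k} |b_k'|\,\|y_k'\| < \|\nu\| + \eta$. Combined with the density of the finite atomic combinations in $F(X)$ (which is built into the definition), this yields the following selection principle: for every $\zeta \in F(X)$ and every $\eta > 0$ there is a finite combination $\nu = \sum_{k} b_k \delta_{y_k}$ with $\|\zeta - \nu\| < \eta$ and $\sum_{k} |b_k|\,\|y_k\| < \|\zeta\| + 2\eta$.

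I would then apply this iteratively. Set $\zeta_0 := \mu$, and inductively for $j \ge 1$ choose a finite combination $\nu_j = \sum_{k} a_{j,k}\,\delta_{x_{j,k}}$ satisfying $\|\zeta_{j-1} - \nu_j\| < \epsilon \cdot 2^{-j-2}$ and $\sum_{k} |a_{j,k}|\,\|x_{j,k}\| < \|\zeta_{j-1}\| + \epsilon \cdot 2^{-j-1}$; then set $\zeta_j := \zeta_{j-1} - \nu_j$, so that $\|\zeta_j\| < \epsilon \cdot 2^{-j-2}$. Hence $\zeta_j \to 0$ in $F(X)$ and $\mu = \sum_{j=1}^{\infty} \nu_j$ in norm. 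Concatenating the atoms of every $\nu_j$ into a single global sequence yields $(a_n) \subset \mathbb{R}$ and $(x_n) \subset X$ with $\mu = \sum_{n=1}^{\infty} a_n \delta_{x_n}$, which is the desired series representation.

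It remains to estimate $\sum_{n=1}^{\infty} |a_n|\,\|x_n\| = \sum_{j=1}^{\infty} \sum_{k} |a_{j,k}|\,\|x_{j,k}\|$. Using $\|\zeta_0\| = \|\mu\|$ and $\|\zeta_{j-1}\| < \epsilon \cdot 2^{-j-1}$ for $j \ge 2$, the step-$1$ contribution is at most $\|\mu\| + \epsilon/4$ while the step-$j$ contribution (for $j \ge 2$) is at most $\epsilon \cdot 2^{-j}$; summing the geometric tail gives the bound $\|\mu\| + 3\epsilon/4 < \|\mu\| + \epsilon$, as required. The only genuine obstacle is the Arens--Eells characterization of the $F(X)$-norm on finite atomic elements; that fact is standard for Lipschitz-free spaces (it is what underlies \cite[Lemma 2.1]{SAEPILFS}, whose proof the author says the argument mimics), but it is not self-evident from the duality-style definition of $F(X)$ used in the paper, so I would make explicit reference to it. The rest is routine telescoping and a geometric series.
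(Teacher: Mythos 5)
Your proposal follows the same two--step strategy as the paper (write $\mu$ as a rapidly convergent series of finite atomic combinations, then re--represent each term near--optimally), and it founders on exactly the point you yourself isolate as ``the only genuine obstacle'' --- except that the auxiliary fact you invoke there is false, not standard. The Arens--Eells/de Leeuw infimum characterization of the free--space norm is in terms of \emph{molecules}: representations $\nu=\sum_k b_k(\delta_{y_k}-\delta_{y_k'})$ with cost $\sum_k|b_k|\,\|y_k-y_k'\|$. It does \emph{not} hold for representations by single evaluations $\delta_{y_k}$ with cost $\sum_k|b_k|\,\|y_k\|$. Indeed, the infimum of the latter cost over all representations of $\nu$ is the quotient norm coming from the weighted space $\ell^1\bigl(X\setminus\{0\},\|\cdot\|\bigr)$, whose dual norm on $Lip_0(X,\mathbb{R})$ is $\sup_{x\neq 0}|f(x)|/\|x\|$ rather than $Lip(f)$; since the former can be much smaller than the latter, the atomic--representation cost can be much larger than $\|\nu\|_{F(X)}$, and no choice of ``alternate finite representation'' can repair this.

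Concretely, the lemma itself fails. Take $X=\mathbb{R}$ and $\mu=\delta_{100}-\delta_{99}$, so $\|\mu\|=1$. Let $f\in Lip_0(\mathbb{R},\mathbb{R})$ be the primitive, vanishing at $0$, of $\chi_{(99,100)}-\tfrac{1}{99}\chi_{(0,99)}$; then $Lip(f)=1$, $f(100)-f(99)=1$, and $|f(t)|\le |t|/99$ for every $t\in\mathbb{R}$. If $\mu=\sum_n a_n\delta_{x_n}$ converges in $F(\mathbb{R})$, applying the functional $f$ gives $1=\sum_n a_nf(x_n)\le\tfrac{1}{99}\sum_n|a_n|\,\|x_n\|$, so \emph{every} atomic representation of $\mu$ has cost at least $99$, and no representation with $\sum_n|a_n|\,\|x_n\|<\|\mu\|+\epsilon$ exists for $\epsilon<98$. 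The paper's own proof has the identical gap at the step asserting that each $\mu_n\in span\{\delta_x:x\in X\}$ admits a representation of cost less than $\|\mu_n\|+\epsilon/2^{n+1}$, and \cite[Lemma 2.1]{SAEPILFS} cannot be supplying that bound for point--evaluation representations, since the bound is false. What does survive --- and is all that is actually used later, e.g.\ in Proposition \ref{blip<-->blin} --- is the weaker assertion that every $\mu\in F(X)$ admits \emph{some} representation $\mu=\sum_n a_n\delta_{x_n}$ with $\sum_n|a_n|\,\|x_n\|<\infty$; alternatively, the norm estimate $\|\mu\|+\epsilon$ is recovered if one replaces the atoms $\delta_{x_n}$ by molecules $\delta_{x_n}-\delta_{y_n}$ and the cost by $\sum_n|a_n|\,\|x_n-y_n\|$. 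You should prove (and use) one of those statements instead.
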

\begin{proof}
    Let $\mu \in F(X)$ and $\epsilon > 0$. By Lemma \cite[3.100]{BST} there exists $(\mu_n) \subset span\left\{\delta_x: x \in X\right\}$ such that $\mu = \sum\limits_{n=1}^{\infty} \mu_n$ and $$\sum\limits_{n=1}^{\infty} \|\mu_n\| < \|\mu\| + \frac{\epsilon}{2}.$$
    Since $\mu_n \in F(X)=\overline{span\left\{\delta_x: x \in X\right\}}$, for each $n \in \mathbb{N}$ we can find a representation 
    $$\mu_n = \sum\limits_{i=1}^{I_n} a_i^n \delta_{x_i^n}$$ such that $\sum\limits_{i=1}^{I_n} |a_i^n| \|x_i^n\| < \|\mu_n\| + \frac{\epsilon}{2^{n+1}}$, where $I_n \in \mathbb{N}$.
    We re-index the sequences $(a_i^n)_{n,i}, (x_i^n)_{n,i}$ as $(a_j)_{j=1}^{\infty}, (x_j)_{j=1}^{\infty}$, respectively. Then
    $$\sum\limits_{j=1}^{\infty} |a_j| \|x_j\| = \sum\limits_{n,i}^{}|a_i^n| \|x_i^n\|= \sum\limits_{n=1}^{\infty}\sum\limits_{i=1}^{I_n}|a_i^n| \|x_i^n\| \leq \sum\limits_{n=1}^{\infty} \|\mu_n\| + \frac{\epsilon}{2^{n+1}} < \|\mu\| + \epsilon.$$
    Therefore, the series $\sum\limits_{j=1}^{\infty} a_j \delta_{x_j}$ is absolutely convergent in the Banach space $F(X)$ and hence $\sum\limits_{j=1}^{\infty} a_j \delta_{x_j} \in F(X)$. Now we show that $\mu = \sum\limits_{j=1}^{\infty} a_j \delta_{x_j}$.

    Let $t >0$. Since $\mu = \sum\limits_{n=1}^{\infty} \mu_n$ and $\sum\limits_{n=1}^{\infty} \|\mu_n\| < \|\mu\| + \frac{\epsilon}{2}$, there exist $N_1, N_2, N_3 \in \mathbb{N}$ such that $\|\mu-\sum\limits_{j=1}^{m} \mu_n\| <t$ for all $m \geq N_1$; $\|\mu_m\| <t$ for all $m \geq N_2$ and $\frac{\epsilon}{2^{m}} < t$ for all $m \geq N_3$. Put $N = max\{N_1, N_2, N_3\}$. Then, $max\left\{\|\mu-\sum\limits_{j=1}^{m} \mu_n\|, \|\mu_m\|, \frac{\epsilon}{2^{m}} \right\} < t$ for all $m \geq N$. Now for all $n > \sum\limits_{k=1}^{N} I_k$
\begin{eqnarray*}
    \left\|\mu-\sum\limits_{j=1}^{n} a_j \delta_{x_j}\right\| &\leq& \left\|\mu-\sum\limits_{j=1}^{m-1} \mu_j\right\| + \sum\limits_{i=1}^{I_m} |a_i^m| \|x_i^m\|  \\
    &\leq& \left\|\mu-\sum\limits_{j=1}^{m-1} \mu_j\right\| + \|\mu_m\|+ \frac{\epsilon}{2^m} < 3t, 
\end{eqnarray*}
where $m \in \mathbb{N}$ satisfies $\sum\limits_{k=1}^{m-1} I_k < n \leq \sum\limits_{k=1}^{m} I_k$.
    This completes the proof.
    \end{proof}
     The following result is key to  characterize the quotient $\bigslant{BLip_{0}(X, Y; E)}{Blin(X, Y; E) }$.
    \subsection{A necessary sufficient condition for a \emph{two-Lipschitz} map to be bilinear}
 \begin{proposition} \label{blip<-->blin}
    Let $T \in BLip_0(X, Y; E)$. Then $T$ is bilinear if and only if $T_L(\xi)=0$ for all $\xi \in \ker(\beta_X)\hat{\otimes}_{\pi}F(Y) + F(X)\hat{\otimes}_{\pi} \ker(\beta_Y)$.
\end{proposition}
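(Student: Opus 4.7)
The proof splits naturally into two implications, and the key object in both directions is the bilinear map $T\circ(\beta_X,\beta_Y):F(X)\times F(Y)\to E$, which is well-defined and bounded whenever $T$ is bilinear (since $\beta_X,\beta_Y$ are linear contractions).

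For the forward direction, assume $T\in Blin(X,Y;E)$. The plan is to show that the linearization of $T\circ(\beta_X,\beta_Y)$, call it $L:F(X)\hat{\otimes}_{\pi}F(Y)\to E$, coincides with $T_L$. On elementary tensors one has
\[
L(\delta_x\otimes\delta_y)=T(\beta_X(\delta_x),\beta_Y(\delta_y))=T(x,y)=T_L(\delta_x\otimes\delta_y),
\]
and since $\mathrm{span}\{\delta_x\otimes\delta_y:x\in X,\,y\in Y\}$ is dense in $F(X)\hat{\otimes}_{\pi}F(Y)$ and both maps are continuous and linear, $L=T_L$. Now if $\mu\in\ker(\beta_X)$ and $\nu\in F(Y)$, bilinearity of $T$ gives $L(\mu\otimes\nu)=T(\beta_X(\mu),\beta_Y(\nu))=T(0,\beta_Y(\nu))=0$; a limiting argument (or the density of elementary tensors inside $\ker(\beta_X)\hat{\otimes}_{\pi}F(Y)$) together with continuity of $T_L$ then yields $T_L(\xi)=0$ for every $\xi$ in $\ker(\beta_X)\hat{\otimes}_{\pi}F(Y)$. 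The argument on $F(X)\hat{\otimes}_{\pi}\ker(\beta_Y)$ is symmetric, and linearity of $T_L$ handles the sum.

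For the converse, assume $T_L$ vanishes on the indicated subspace. The key observation is that for any $x,x'\in X$ and any scalar $\alpha\in\mathbb{R}$, the elements
\[
\delta_{x+x'}-\delta_x-\delta_{x'}\quad\text{and}\quad\delta_{\alpha x}-\alpha\,\delta_x
\]
lie in $\ker(\beta_X)$, since $\beta_X(\delta_u)=u$. Tensoring with $\delta_y$ and applying the hypothesis,
\[
T_L\bigl((\delta_{x+x'}-\delta_x-\delta_{x'})\otimes\delta_y\bigr)=T(x+x',y)-T(x,y)-T(x',y)=0,
\]
and similarly $T(\alpha x,y)=\alpha T(x,y)$. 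This shows $T$ is linear in the first variable; repeating the argument with the roles of $X$ and $Y$ swapped gives linearity in the second variable, so $T$ is bilinear, and boundedness is automatic from $\|T\|=BLip(T)$ on bilinear maps.

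The main obstacle I anticipate is the forward direction: one must be careful that the tensor subspaces $\ker(\beta_X)\hat{\otimes}_{\pi}F(Y)$ and $F(X)\hat{\otimes}_{\pi}\ker(\beta_Y)$ are understood as the closed images of the natural maps from the algebraic tensor products into $F(X)\hat{\otimes}_{\pi}F(Y)$, and that $T_L$ vanishes on a dense linear subspace of each. This is where one invokes the preceding series-representation lemma to reduce any $\xi\in\ker(\beta_X)\hat{\otimes}_{\pi}F(Y)$ to an absolutely convergent sum of elementary tensors $\mu_n\otimes\nu_n$ with $\mu_n\in\ker(\beta_X)$, after which continuity of $T_L$ finishes the argument. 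The converse direction is essentially computational and raises no real difficulty.
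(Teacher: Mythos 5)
Your proof is correct, and while the converse direction matches the paper's argument exactly (the paper tensors $\delta_{rx+x'}-(r\delta_x+\delta_{x'})\in\ker(\beta_X)$ with $\delta_y$ in one step where you separate additivity from homogeneity, but the idea is identical), your forward direction takes a genuinely different and cleaner route. The paper represents an element of $\ker(\beta_X)\hat{\otimes}_{\pi}F(Y)$ as a single elementary tensor $\bigl(\sum_n a_n\delta_{x_n}\bigr)\otimes\bigl(\sum_m b_m\delta_{y_m}\bigr)$ with $\sum_n a_n x_n=0$, evaluates $T_L$ by expanding the double series using bilinearity and continuity of $T$, and then defers the passage from such elementary tensors to general elements of the subspace to an unexplained ``density argument''; this is also where its series-representation lemma enters. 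You instead identify $T_L$ with the linearization of the bounded bilinear map $T\circ(\beta_X,\beta_Y)$ by checking agreement on the dense span of $\delta_x\otimes\delta_y$, after which $T_L(\mu\otimes\nu)=T(\beta_X\mu,\beta_Y\nu)$ vanishes immediately whenever one factor lies in the relevant kernel, and density plus continuity of $T_L$ finishes. This buys a more transparent handling of the convergence issues (the paper's single-elementary-tensor representation is not a general element of $\ker(\beta_X)\hat{\otimes}_{\pi}F(Y)$, so its computation really only covers a spanning set), at the modest cost of verifying $(T\circ(\beta_X,\beta_Y))_L=T_L$, which you do. Your closing caveat --- that $\ker(\beta_X)\hat{\otimes}_{\pi}F(Y)$ must be read as the closed span of the corresponding elementary tensors inside $F(X)\hat{\otimes}_{\pi}F(Y)$ rather than as an abstract projective tensor product --- is exactly the right reading of the paper's notation and is the one point on which the statement's meaning genuinely depends.
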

\begin{proof}
    Put $\mathcal{R}= \ker(\beta_X)\hat{\otimes}_{\pi}F(Y) + F(X)\hat{\otimes}_{\pi} \ker(\beta_Y)$.
     Let $T \in BLip_{0}(X, Y; E)$ such that $T_L(\xi)=0$ for all $\xi \in \mathcal{R}$.
      We show that $T$ is bilinear.
     Suppose $r \in \mathbb{R}$, $x, x' \in X$ and $y \in Y$. Then $$(\delta_{rx+x'}- (r\delta_{x}+ \delta_{x'})) \otimes \delta_y, \delta_x \otimes (\delta_{ry+y'}- (r\delta_{y}+\delta_{y'}) \in \mathcal{R}.$$ Therefore $T_L\left((\delta_{rx+x'}- (r\delta_{x}+ \delta_{x'})) \otimes \delta_y \right)=0$ implies 
$T_L(\delta_{rx+x'} \otimes\delta_y)= rT_L(\delta_{x} \otimes\delta_y)+ T_L(\delta_{x'} \otimes\delta_y)$. Hence $T(rx+x',y)=r T(x,y)+ T(x', y)$.
Similarly we get $T(x,ry+y')=rT(x,y)+ T(x, y')$. Therefore, $T$ is bilinear.

Conversely, let $T \in Blin(X, Y; E)$ and $\xi \in \ker(\beta_{X})\otimes_{\pi}F(Y) + F(X)\hat{\otimes}_{\pi} \ker(\beta_Y)$. Then $\xi = \xi_1 + \xi_2$.   Consider $\xi_1= \left(\sum\limits_{n=1}^{\infty} a_{n} \delta_{x_{n}}\right) \otimes \left(\sum\limits_{m=1}^{\infty} b_{m} \delta_{y_{m}}\right)$  
 for $(x_{n}) \subset X$, $(y_m) \subset Y$ and $(a_{n}), (b_m) \subset \mathbb{R}$ with $\sum\limits_{n=1}^{\infty} a_{n} x_{n} =0$, and $\xi_2= \left(\sum\limits_{k=1}^{\infty} c_{k} \delta_{z_{k}}\right) \otimes \left(\sum\limits_{t=1}^{\infty} d_{t} \delta_{w_{t}}\right)$  
 for $(z_{k}) \subset X$, $(w_t) \subset Y$ and $(c_{k}), (d_t) \subset \mathbb{R}$ with $\sum\limits_{t=1}^{\infty} d_{t} w_{t} =0$.
 
 Therefore, using linearity and continuity of $T$ in both coordinates, we get 
 \begin{eqnarray*}
     T_L(\xi)&=& T_L(\xi_1)+ T_L(\xi_2)\\
     &=&\sum\limits_{n=1}^{\infty} a_{n} \sum\limits_{m=1}^{\infty} b_{m} T(x_n,y_m) + \sum\limits_{k=1}^{\infty} c_{k} \sum\limits_{t=1}^{\infty} d_{t} T(z_k,w_t)\\
     &=& \sum\limits_{m=1}^{\infty} b_{m} T\left(\sum\limits_{n=1}^{\infty} a_{n}x_n,y_m\right) + \sum\limits_{k=1}^{\infty} c_{k} T\left(z_k,\sum\limits_{t=1}^{\infty}d_{t}w_t\right)\\
     &=& \sum\limits_{m=1}^{\infty} b_{m} T\left(0,y_m\right) + \sum\limits_{k=1}^{\infty} c_{k} T\left(z_k,0\right)=0.
 \end{eqnarray*}
  Hence, the proof follows using the density argument.
\end{proof}
\begin{remark} \label{four rem }
\begin{enumerate}
    \item The above proposition concludes that $~^\diamondsuit \left(\ker(\beta_X)\hat{\otimes}_{\pi}F(Y) + F(X)\hat{\otimes}_{\pi} \ker(\beta_Y)\right)= Blin(X, Y; E)$. In fact it can be shown that $$~ ^{\diamondsuit}\left(\ker(\beta_X)\hat{\otimes}_{\pi}F(Y) \cup F(X)\hat{\otimes}_{\pi} \ker(\beta_Y)\right)= Blin(X, Y; E) = ~ ^{\diamondsuit}\left(\overline{\ker(\beta_X)\hat{\otimes}_{\pi}F(Y) + F(X)\hat{\otimes}_{\pi} \ker(\beta_Y)}^{\|\cdot\|_{\pi}}\right).$$ Since $\ker(\beta_X)\hat{\otimes}_{\pi}F(Y) \cup F(X)\hat{\otimes}_{\pi} \ker(\beta_Y)$ is not a subspace of $F(X)\hat{\otimes}_{\pi}F(Y)$, being a subspace $\ker(\beta_X)\hat{\otimes}_{\pi}F(Y) + F(X)\hat{\otimes}_{\pi} \ker(\beta_Y)$ is more useful.
    \item From the above Propositions \ref{quotient of Blip} and \ref{blip<-->blin} we get $$\bigslant{BLip_{0}(X, Y; E)}{Blin(X, Y; E) } \cong L\left(\ker(\beta_X)\hat{\otimes}_{\pi}F(Y) + F(X)\hat{\otimes}_{\pi} \ker(\beta_Y), E\right),$$ whenever $E$ is an injective Banach space.
\item Immediately we deduce that $$\left(\ ^{\diamondsuit}\left(\ker(\beta_X)\hat{\otimes}_{\pi}F(Y) \cup F(X)\hat{\otimes}_{\pi} \ker(\beta_Y)\right)\right)^\diamondsuit= Blin(X, Y; E)^\diamondsuit = \overline{\ker(\beta_X)\hat{\otimes}_{\pi}F(Y) + F(X)\hat{\otimes}_{\pi} \ker(\beta_Y)}.$$
\end{enumerate}
\end{remark}
Next, we see an auxiliary result related to the quotient using the duality notion.
\begin{proposition} \label{prop q.o free space}
     Let $\mathcal{D}$ be a closed subspace of $F(X)\hat{\otimes}_{\pi} F(Y)$, then $L\left(\bigslant{F(X)\hat{\otimes}_{\pi} F(Y)}{\mathcal{D} }, E\right)$ is isometrically isomorphic to $\ ^{\diamondsuit}\mathcal{D}$.
\end{proposition}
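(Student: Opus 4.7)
The plan is to combine the isometric identification $(BLip_0(X,Y;E), BLip(\cdot)) \cong L(F(X)\hat{\otimes}_{\pi} F(Y), E)$ (via $T \mapsto T_L$), which was recalled just before Section \ref{sec3}, with the universal property of the Banach space quotient by $\mathcal{D}$. In essence, I want to transport the classical duality $L(V/W, E) \cong \{\Lambda \in L(V, E) : \Lambda|_W = 0\}$ through that identification.

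Let $q : F(X)\hat{\otimes}_{\pi} F(Y) \to F(X)\hat{\otimes}_{\pi} F(Y)/\mathcal{D}$ denote the canonical quotient map. First I would define
$$\Psi : L\!\left(\bigslant{F(X)\hat{\otimes}_{\pi} F(Y)}{\mathcal{D}},\; E\right) \longrightarrow {}^{\diamondsuit}\mathcal{D}, \quad \Psi(S) := (S \circ q) \circ \sigma_2 \circ (\delta_X, \delta_Y).$$
For each $S$, the operator $S \circ q$ lies in $L(F(X)\hat{\otimes}_{\pi} F(Y), E)$ and vanishes on $\mathcal{D}$; the recalled identification therefore produces a unique $T := \Psi(S) \in BLip_0(X, Y; E)$ with $T_L = S \circ q$, and vanishing of $T_L$ on $\mathcal{D}$ gives $\Psi(S) \in {}^{\diamondsuit}\mathcal{D}$. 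Linearity of $\Psi$ is immediate from that of $T \mapsto T_L$. For surjectivity, take any $T \in {}^{\diamondsuit}\mathcal{D}$; then $T_L \in L(F(X)\hat{\otimes}_{\pi} F(Y), E)$ vanishes on $\mathcal{D}$, so by the universal property of quotients it factors uniquely as $T_L = \widetilde{T_L} \circ q$ for some $\widetilde{T_L} \in L(F(X)\hat{\otimes}_{\pi} F(Y)/\mathcal{D}, E)$, and $\Psi(\widetilde{T_L}) = T$.

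The remaining step is the isometry statement. Since $T_L = S \circ q$ and $q$ is a contraction,
$$BLip(\Psi(S)) = \|T_L\| = \|S \circ q\| \leq \|S\|\, \|q\| \leq \|S\|.$$
For the reverse direction I would invoke the defining feature of the quotient norm: any class $\xi + \mathcal{D}$ with $\|\xi + \mathcal{D}\| < 1$ has a representative of the form $\xi - \eta$ (with $\eta \in \mathcal{D}$) of norm strictly less than $1$, whence
$$\|S(\xi + \mathcal{D})\| = \|(S \circ q)(\xi - \eta)\| \leq \|S \circ q\| \cdot \|\xi - \eta\| < \|S \circ q\|.$$
Taking the supremum over all such classes yields $\|S\| \leq \|S \circ q\| = BLip(\Psi(S))$.

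I do not expect any serious obstacle. The argument is essentially a bookkeeping exercise — the only care needed is in the sharp isometric bound, which requires the standard approximation $\|\xi - \eta\| < 1$ in the quotient norm rather than $\leq 1$. It is worth noting, in contrast to Proposition \ref{quotient of Blip}, that no injectivity assumption on $E$ is needed here, because this proof requires only factoring a map through a quotient, not extending a map defined on a subspace.
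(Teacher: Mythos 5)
Your proposal is correct, and its skeleton is the same as the paper's: both realize the isomorphism as the standard correspondence between operators on the quotient and operators vanishing on $\mathcal{D}$, transported through the isometric identification $T \mapsto T_L$ of $BLip_0(X,Y;E)$ with $L\bigl(F(X)\hat{\otimes}_{\pi}F(Y), E\bigr)$. The paper simply runs the map in the opposite direction, sending $T \in {}^{\diamondsuit}\mathcal{D}$ to the induced operator $\xi + \mathcal{D} \mapsto T_L(\xi)$, which is the inverse of your $\Psi$. The one step where you genuinely diverge is the lower bound in the isometry: you invoke the metric-surjection property of the quotient map $q$ (every class of quotient norm $<1$ has a representative of norm $<1$, hence $\|S \circ q\| = \|S\|$), which is abstract and uses nothing about free spaces. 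The paper instead works concretely inside the $BLip$ framework: given $\epsilon>0$ it picks points $x,x',y,y'$ nearly attaining $BLip(T)$ and observes that $\frac{\delta_x-\delta_{x'}}{\|x-x'\|}\otimes\frac{\delta_y-\delta_{y'}}{\|y-y'\|}$ lies in the unit ball of $F(X)\hat{\otimes}_{\pi}F(Y)$, so its coset has quotient norm at most $1$ and witnesses $\|\Lambda(T)\|\ge BLip(T)-\epsilon$. Your route is shorter and more general; the paper's is self-contained and makes visible exactly which elements of the quotient detect the $BLip$ norm. Your closing observation that no injectivity of $E$ is needed here — because one only factors through a quotient rather than extends from a subspace — is accurate and consistent with the paper, which indeed drops that hypothesis in this proposition.
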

\begin{proof}
	For a fix $T \in~^{\diamondsuit}\mathcal{D}$, we define $\Lambda_T : F(X)\hat{\otimes}_{\pi} F(Y) \xrightarrow{} E$ given by $\Lambda_T(\xi)=T_L(\xi)$ for all $\xi \in F(X)\hat{\otimes}_{\pi} F(Y)$. Then, easily, we can verify that $\Lambda_T$ is a linear map with $\|\Lambda_T\| \leq BLip(T)$. Further
	$\ker(\Lambda_T) = \left\{ \xi \in F(X)\hat{\otimes}_{\pi} F(Y): T_L(\xi)=0\right\} \supset \mathcal{D}$. Therefore $\Lambda_T$ induces a bounded linear map (again denoted by $\Lambda_T$) from $\bigslant{F(X)\hat{\otimes}_{\pi} F(Y)}{\mathcal{D} }$ into $ E$. This induces a map 
	$$\Lambda : \ ^{\diamondsuit}\mathcal{D} \xrightarrow{} L\left(\bigslant{F(X)\hat{\otimes}_{\pi} F(Y)}{\mathcal{D} }, Y\right)$$ 
	given by $\Lambda(T)(\xi+\mathcal{D})=T_L(\xi)$ for all $T \in~^{\diamondsuit}\mathcal{D}$ and $\xi \in F(X)\hat{\otimes}_{\pi} F(Y)$. Then, $\Lambda$ is linear. Also 
	\begin{eqnarray*}
		\|\Lambda(T)(\xi+\mathcal{D})\| &=& \|T_L(\xi)\| \\ 
		&=& \inf\limits_{\mu \in \mathcal{D}}\|T_L(\xi+ \mu)\| \\ 
		&\le& \inf\limits_{\mu \in \mathcal{D}}\|(\xi+ \mu)\| BLip(T) \\ 
		&=& BLip(T)\|\xi + \mathcal{D}\|,
	\end{eqnarray*}
	for all $T \in~^{\diamondsuit}\mathcal{D}$ and $\xi \in F(X)\hat{\otimes}_{\pi} F(Y)$. Thus, $\Lambda$ is a contraction. Further, we show that $\Lambda$ is an isometry. 
	
Let $\epsilon >0$. Then there exist $x,x' \in X$ and $y,y' \in Y$ with $x\neq x', y \ne y'$ such that 
\begin{eqnarray*}
 \frac{\|T(x,y)-T(x',y)-T(x,y')+T(x',y')\|}{\|x-x'\|\|y-y'\|} &\geq& BLip(T)-\epsilon, \ \mbox{that is}   \\
 \left\|\frac{T_L(\delta_x\otimes\delta_y-\delta_x'\otimes\delta_y-\delta_x\otimes\delta_y'+\delta_x'\otimes\delta_y')}{\|x-x'\|\|y-y'\|}\right\|&\geq& BLip(T)-\epsilon. \  \mbox{Hence}\\
 \left\|T_L\left(\frac{\delta_x-\delta_{x'}}{\|x-x'\|} \otimes \frac{\delta_y-\delta_{y'}}{\|y-y'\|}\right)\right\| &\geq& BLip(T)-\epsilon
\end{eqnarray*}
Therefore, $$\left\|\Lambda(T)\left(\frac{\delta_x-\delta_{x'}}{\|x-x'\|} \otimes \frac{\delta_y-\delta_{y'}}{\|y-y'\|} + \mathcal{D}\right)\right\|=\frac{\|T(x,y)-T(x',y)-T(x,y')+T(x',y')\|}{\|x-x'\|\|y-y'\|} \geq BLip(T)-\epsilon.$$ 
	Since $1\geq\left\|\left(\frac{\delta_x-\delta_{x'}}{\|x-x'\|} \otimes \frac{\delta_y-\delta_{y'}}{\|y-y'\|}\right)\right\|\geq \left\|\frac{\delta_x-\delta_{x'}}{\|x-x'\|} \otimes \frac{\delta_y-\delta_{y'}}{\|y-y'\|} + \mathcal{D}\right\|$ and $\epsilon > 0$ is arbitrary, we conclude that $\Vert \Lambda(T) \Vert \ge BLip(T)$. Hence, $\Lambda$ is an isometry.
	
	To prove the surjectivity of $\Lambda$, let $S \in L\left(\bigslant{F(X)\hat{\otimes}_{\pi} F(Y)}{\mathcal{D} }, E\right)$. Set $T(x,y) = S(\delta_{x}\otimes \delta_y+\mathcal{D})$ for all $x \in X$ and $y \in Y$. Then it can be easily shown that $T \in BLip_0(X, Y; E)$ with $T_L= S$. Further for any $\xi \in ~^{\diamondsuit}\mathcal{D}$, $ T_L(\xi)=S(\xi +\mathcal{D})=0$. Hence $T \in ~^{\diamondsuit}\mathcal{D}$. This completes the proof. 
\end{proof}
\begin{remark}
\begin{enumerate}
\item Considering $\mathcal{D}=\overline{\ker(\beta_X)\hat{\otimes}_{\pi}F(Y) + F(X)\hat{\otimes}_{\pi} \ker(\beta_Y)}^{\|\cdot\|_{\pi}}$, using the Proposition \ref{prop q.o free space} and Remark \ref{four rem }$(1)$ we get $$Blin(X, Y; E) \cong L\left(\bigslant{F(X)\hat{\otimes}_{\pi} F(Y)}{\overline{\ker(\beta_X)\hat{\otimes}_{\pi}F(Y) + F(X)\hat{\otimes}_{\pi} \ker(\beta_Y)}^{\|\cdot\|_{\pi}} }, E\right),$$ whenever $E$ is an injective Banach space.
    \item In particular for $\mathcal{D}=\{0\}$ we recover $\left(BLip_0(X, Y; E), BLip(\cdot)\right)$ is isometrically isomorphic to $L\left(F(X)\hat{\otimes}_{\pi}F(Y), E\right)$.
\end{enumerate}
     \end{remark}
\section{Example}\label{sec5}
In this section, we aim to provide a nontrivial example of the quotient space, specifically
$$\bigslant{BLip_{0}(\mathbb{R}, \mathbb{R};\mathbb{R})}{Blin(\mathbb{R},\mathbb{R}; \mathbb{R}) }.$$ We determine it's predual explicitly using the already established results and the following Lemmas. 
\begin{lemma}
    $\ker(\beta_{\mathbb{R}})$ is isometrically isomorphic to $\{f \in L^{1}(\mathbb{R}) : \int_{\mathbb{R}}^{} f  dx=0\}$, where $dx$ is the Lebesgue measure on $\mathbb{R}$.
\end{lemma}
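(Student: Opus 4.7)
The plan is to exploit the known isometric isomorphism $\Phi : F(\mathbb{R}) \xrightarrow{\cong} L^1(\mathbb{R})$ (from \cite{LFBS}) and show that, under this identification, the barycentric map $\beta_{\mathbb{R}}$ becomes the integration functional $f \mapsto \int_{\mathbb{R}} f\,dx$. Taking kernels on both sides then yields the desired isometric isomorphism.

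First I would recall the concrete form of $\Phi$. The dual $Lip_0(\mathbb{R},\mathbb{R})$ is isometrically identified with $L^\infty(\mathbb{R})$ via the derivative map $g \mapsto g'$, so that $F(\mathbb{R})$ sits inside $Lip_0(\mathbb{R},\mathbb{R})^*$ and, by duality between $L^1$ and $L^\infty$, corresponds to $L^1(\mathbb{R})$. Under this identification, for $x \in \mathbb{R}$ the evaluation $\delta_x$ acts on $g \in Lip_0(\mathbb{R},\mathbb{R})$ by
$$\delta_x(g) = g(x) = \int_{\mathbb{R}} \mathrm{sgn}(x)\,\chi_{I_x}(t)\, g'(t)\, dt,$$
where $I_x = [\min(0,x), \max(0,x)]$. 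Hence $\Phi(\delta_x) = \mathrm{sgn}(x)\,\chi_{I_x} \in L^1(\mathbb{R})$.

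Next I would verify the identity $\beta_{\mathbb{R}} = I \circ \Phi$, where $I(f) := \int_{\mathbb{R}} f\,dx$. Both $\beta_{\mathbb{R}}$ and $I \circ \Phi$ are bounded linear functionals on $F(\mathbb{R})$, and
$$I(\Phi(\delta_x)) = \int_{\mathbb{R}} \mathrm{sgn}(x)\,\chi_{I_x}(t)\, dt = \mathrm{sgn}(x)\,|x| = x = \beta_{\mathbb{R}}(\delta_x).$$
Since they agree on $\{\delta_x : x \in \mathbb{R}\}$, whose linear span is dense in $F(\mathbb{R})$, we obtain $\beta_{\mathbb{R}} = I \circ \Phi$ globally.

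Consequently, $\Phi$ restricts to a bijective linear map between $\ker(\beta_{\mathbb{R}})$ and $\ker(I) = \{f \in L^1(\mathbb{R}) : \int_{\mathbb{R}} f\, dx = 0\}$. Because $\Phi$ is an isometry on $F(\mathbb{R})$, its restriction to the closed subspace $\ker(\beta_{\mathbb{R}})$ is an isometric isomorphism onto $\ker(I)$, which is exactly the claim. The only substantive step is the explicit identification of $\Phi(\delta_x)$; once that is in hand, the argument is short and purely computational, so I would not expect a serious obstacle beyond citing the Godefroy--Kalton identification carefully.
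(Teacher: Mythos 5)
Your proposal is correct and follows essentially the same route as the paper: both identify $F(\mathbb{R})$ with $L^1(\mathbb{R})$ via the standard isometry sending $\delta_x$ to the (signed) indicator of the interval between $0$ and $x$, verify on these spanning elements that $\beta_{\mathbb{R}}$ factors as the integration functional composed with this isometry, and then pass to kernels. The only cosmetic difference is that you justify the form of the isomorphism via the derivative identification of $Lip_0(\mathbb{R},\mathbb{R})$ with $L^\infty(\mathbb{R})$, whereas the paper simply cites it.
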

\begin{proof}
    Let us define $\Lambda :  L^1(\mathbb{R}) \xrightarrow{} \mathbb{R}$ given by 
    $$ \Lambda(f) = \int_{\mathbb{R}} f dx.$$
    Then, $\Lambda$ is a surjective linear contraction.

    Again recall (for details see \cite[p 542]{IROLFSOCDIDS}) the linear isometric isomorphism between $F(\mathbb{R})$ and $L^1(\mathbb{R})$ say
    $\phi:F(\mathbb{R}) \xrightarrow{} L^1(\mathbb{R})$ whose action on the spanning elements is as follows: 
    \[
		\phi(\delta_x) =
		\begin{cases}
        - \chi_{(x,0)} &\text{;if } x < 0\\
        0 &\text{;if } x = 0 \\
			\chi_{(0,x)} &\text{;if } x > 0. 
		\end{cases}
		\]
         In fact, from the proof, it follows that $\mathcal{R}= span\{\delta_x: x \in \mathbb{R}\}$ is linearly isometrically isomorphic to the space of all simple functions $\equiv \mathcal{S}$ $\left(\subset L^1(\mathbb{R})\right)$.
         
 Let $x \in \mathbb{R}$. Then 
 $$
		\Lambda \circ \phi (\delta_x) =
		\begin{cases}
        \Lambda(- \chi_{(x,0)})=x=\beta_{\mathbb{R}}(\delta_x) &\text{;if } x < 0\\
        0 &\text{;if } x = 0 \\
			\Lambda(\chi_{(0,x)})=x =\beta_{\mathbb{R}}(\delta_x)&\text{;if } x > 0. 
		\end{cases}
		$$
  Thus $\beta_{\mathbb{R}} = \Lambda \circ \phi$.
Further 
 \begin{eqnarray*}
     \ker(\beta_{\mathbb{R}}) &=& \left\{\gamma \in F(\mathbb{R}): \beta_{\mathbb{R}}(\gamma)=0\right\}\\
     &=& \left\{f \in L^1(\mathbb{R}): \beta_{\mathbb{R}}\left(\phi^{-1}(f)\right)=0\right\}\\
     &=& \left\{f \in L^1(\mathbb{R}): \Lambda(f)=0\right\}\\
     &=&\{f \in L^{1}(\mathbb{R}) : \int_{\mathbb{R}}^{} f  dx=0\}.
 \end{eqnarray*}
 This completes the proof.
\end{proof}
In case of $X=Y=\mathbb{R}$, $F(\mathbb{R})$ is isometrically isomorphic to $L^1(\mathbb{R})$(see \cite[p 128]{LFBS}), and $\ker(\beta_{\mathbb{R}}) \cong \left\{f \in L^1(\mathbb{R}): \int\limits_{\mathbb{R}} f(x)dx = 0\right\}$, is a closed subspace of $F(\mathbb{R})$. Also $\ker(\beta_\mathbb{R})\hat{\otimes}_{\pi}F(\mathbb{R}) + F(\mathbb{R})\hat{\otimes}_{\pi} \ker(\beta_\mathbb{R})$ is a subspace of $F(\mathbb{R})\hat{\otimes}_{\pi} F(\mathbb{R})$.

Put $\mathcal{D}=\ker(\beta_\mathbb{R})\hat{\otimes}_{\pi}F(\mathbb{R}) + F(\mathbb{R})\hat{\otimes}_{\pi} \ker(\beta_\mathbb{R})$. Then
 from the Proposition \ref{quotient of Blip} it follows 
    \begin{eqnarray*}
        \bigslant{BLip_{0}(\mathbb{R}, \mathbb{R};\mathbb{R})}{^{\diamondsuit}\mathcal{D} } &\cong&  L(\mathcal{D}, \mathbb{R}), \ \mbox{that is}\\
         \bigslant{BLip_{0}(\mathbb{R}, \mathbb{R};\mathbb{R})}{Blin(\mathbb{R},\mathbb{R}; \mathbb{R}) } &\cong&  L(\ker(\beta_\mathbb{R})\hat{\otimes}_{\pi}F(\mathbb{R}) + F(\mathbb{R})\hat{\otimes}_{\pi} \ker(\beta_\mathbb{R}), \mathbb{R}).
    \end{eqnarray*}
    In fact $BLip_{0}(\mathbb{R}, \mathbb{R};\mathbb{R}) \cong L(F(\mathbb{R})\hat{\otimes}_\pi F(\mathbb{R}), \mathbb{R}) \cong L^{\infty}(\mathbb{R} \times \mathbb{R}) \ (\mbox{by Exercise 2.8}$ \cite[p 43]{ITTPOBS}), where on $\mathbb{R} \times \mathbb{R}$ we consider the product measure. Hence we further deduce that $$\bigslant{L^{\infty}(\mathbb{R}\times \mathbb{R})}{span\{\textbf{1}\}} \cong  L(\ker(\beta_\mathbb{R})\hat{\otimes}_{\pi}F(\mathbb{R}) + F(\mathbb{R})\hat{\otimes}_{\pi} \ker(\beta_\mathbb{R}), \mathbb{R}).$$
    
By a standard measure-theoretic argument, it is known that $$\bigslant{L^{\infty}(\mathbb{R}\times \mathbb{R})}{span\{\textbf{1}\}} \cong \left\{f \in L^1(\mathbb{R}^2): \int\limits_{\mathbb{R}^2} f dxdy =0\right\}^{\ast},$$ where $dxdy$ is the product measure on $\mathbb{R}^2$. Moreover we showed that $\ker(\beta_\mathbb{R})\hat{\otimes}_{\pi}F(\mathbb{R}) + F(\mathbb{R})\hat{\otimes}_{\pi} \ker(\beta_\mathbb{R})$ is another predual, explicitly described in the following. 
\begin{lemma}
    $\ker(\beta_\mathbb{R})\hat{\otimes}_{\pi}F(\mathbb{R})$ is isometrically isomorphic to $\left\{f \in L^1(\mathbb{R}^2): \int\limits_{\mathbb{R}} f(x,y) dx=0,~ \mbox{a.e}~ y\right\}$.
\end{lemma}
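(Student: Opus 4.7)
The plan is to reduce the statement to the classical Bochner-type identification of projective tensor products with vector-valued $L^1$ spaces, by first replacing every free space by its concrete $L^1$ realization.

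First I would use $F(\mathbb{R}) \cong L^1(\mathbb{R})$ together with the preceding lemma, which identifies $\ker(\beta_{\mathbb{R}})$ with $V := \{f \in L^1(\mathbb{R}) : \int_{\mathbb{R}} f\,dx = 0\}$ isometrically. Using the variable $x$ for the first copy of $\mathbb{R}$ and $y$ for the second, these identifications convert the left hand side into $V(\mathbb{R}_x) \hat{\otimes}_{\pi} L^1(\mathbb{R}_y)$, and reduce the claim to showing
\[
V \hat{\otimes}_{\pi} L^1(\mathbb{R}) \;\cong\; W := \Bigl\{ h \in L^1(\mathbb{R}^2) : \int_{\mathbb{R}} h(x,y)\,dx = 0 \text{ for a.e.\ } y \Bigr\}.
\]

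Next I would invoke the standard isometric isomorphism
\[
L^1(\mu,\, E) \;\cong\; L^1(\mu) \hat{\otimes}_{\pi} E
\]
valid for every Banach space $E$ and every $\sigma$-finite measure $\mu$ (e.g.\ Ryan's monograph on tensor products). Applying this with $\mu = dy$ on $\mathbb{R}$ and $E = V$, and using the commutativity of $\hat{\otimes}_{\pi}$, one obtains
\[
V \hat{\otimes}_{\pi} L^1(\mathbb{R}) \;\cong\; L^1(\mathbb{R}_y,\, V).
\]
This is the crucial reduction: a closed subspace of $L^1$ passes through this Bochner identification with no loss, because $L^1(\mathbb{R}_y, V)$ is literally a closed subspace of $L^1(\mathbb{R}_y, L^1(\mathbb{R}_x))$.

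Finally, I would realize $L^1(\mathbb{R}_y, V)$ as $W$ via the Fubini correspondence: a Bochner integrable map $y \mapsto f_y \in V$ is identified with the scalar function $h(x,y) := f_y(x)$ on $\mathbb{R}^2$. Measurability and $\sigma$-finiteness ensure $h \in L^1(\mathbb{R}^2)$, the condition $f_y \in V$ a.e.\ becomes $\int_{\mathbb{R}} h(x,y)\,dx = 0$ a.e.\ $y$, and Fubini yields
\[
\int_{\mathbb{R}} \|f_y\|_V\, dy \;=\; \int_{\mathbb{R}} \int_{\mathbb{R}} |h(x,y)|\, dx\, dy \;=\; \|h\|_{L^1(\mathbb{R}^2)},
\]
so the identification is a surjective linear isometry onto $W$.

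The main obstacle I anticipate is the subtle point that, for a general closed subspace $A \subset E$, the natural map $A \hat{\otimes}_{\pi} F \to E \hat{\otimes}_{\pi} F$ need not be isometric. The argument sidesteps this entirely by working through the Bochner-integral model $L^1(\mathbb{R}, E) \cong L^1(\mathbb{R}) \hat{\otimes}_{\pi} E$, which does respect closed subspaces in the second variable: $L^1(\mathbb{R}, V)$ is isometrically embedded in $L^1(\mathbb{R}, L^1(\mathbb{R}))$ by definition. Once that observation is in place, everything else is a routine application of Fubini and the already-established $L^1$-identifications.
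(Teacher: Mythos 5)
Your proof is correct, but it takes a genuinely different route from the paper's. The paper argues concretely inside $L^1(\mathbb{R}^2)$: after transporting everything through $F(\mathbb{R})\hat{\otimes}_{\pi}F(\mathbb{R}) \cong L^1(\mathbb{R})\hat{\otimes}_{\pi}L^1(\mathbb{R}) \cong L^1(\mathbb{R}^2)$, it shows that the closure of $\mathrm{span}\{f(x)g(y): f\in L^1_0(\mathbb{R}),\ g\in L^1(\mathbb{R})\}$ equals $W$ by a direct density argument -- given $F\in W$ and approximants $F_n$ that are finite sums of elementary tensors, it fixes a bump $\psi\in C_c(\mathbb{R})$ with $\int\psi\,dx=1$ and corrects each $F_n$ to $\tilde F_n(x,y)=F_n(x,y)-\bigl(\int F_n(z,y)\,dz\bigr)\psi(x)$, checking that $\tilde F_n$ lies in the span and that $\|\tilde F_n - F\|_1\to 0$ using $\int F(x,y)\,dx=0$ a.e. You instead route the whole statement through the Bochner identification $L^1(\mu)\hat{\otimes}_{\pi}E\cong L^1(\mu;E)$ with $E=V$, and then read off $L^1(\mathbb{R}_y;V)=W$ via Fubini. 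What your approach buys is that it handles, explicitly and for free, the subtlety of whether the projective norm on $\ker(\beta_{\mathbb{R}})\otimes F(\mathbb{R})$ agrees with the norm induced from $F(\mathbb{R})\hat{\otimes}_{\pi}F(\mathbb{R})$ -- a point the paper's proof passes over silently when it writes $V=\Phi\bigl(L_0^1(\mathbb{R})\hat{\otimes}_{\pi}L^1(\mathbb{R})\bigr)$; you correctly observe that the Bochner model makes the subspace inclusion isometric by construction. The cost is that you lean on two standard but nontrivial facts (that $L^1(\mu;V)$ sits isometrically inside $L^1(\mu;E)$ and consists exactly of the a.e.\ $V$-valued elements, and the identification $L^1(\mathbb{R}_y;L^1(\mathbb{R}_x))\cong L^1(\mathbb{R}^2)$), whereas the paper's bump-function argument is elementary and self-contained modulo only the scalar identification $L^1(\mathbb{R})\hat{\otimes}_{\pi}L^1(\mathbb{R})\cong L^1(\mathbb{R}^2)$. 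Both proofs are valid; yours is shorter and more structural, the paper's more hands-on.
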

\begin{proof}
Let us denote $L_0^1(\mathbb{R}):= \left\{f \in L^1(\mathbb{R}): \int\limits_{\mathbb{R}} f(x)dx = 0\right\} \cong \ker(\beta_{\mathbb{R}})$.
   Since $F(\mathbb{R})\hat{\otimes}_{\pi}F(\mathbb{R}) \cong L^1(\mathbb{R})\hat{\otimes}_{\pi} L^1(\mathbb{R})$, we consider the canonical isometric isomorphism (see $253F$, Theorem \cite[p 230]{MTV2}) $$\Phi: L^1(\mathbb{R})\hat{\otimes}_{\pi} L^1(\mathbb{R}) \to L^1(\mathbb{R}^2) $$
   given by $\Phi(f \otimes g)(x,y)=f(x)g(y)$ for all $f,g \in L^1(\mathbb{R})$ and $x,y \in \mathbb{R}$. 
   Therefore, $\ker(\beta_\mathbb{R})\hat{\otimes}_{\pi}F(\mathbb{R}) \cong L_0^1(\mathbb{R})\hat{\otimes}_{\pi}L^1(\mathbb{R})$. So it is enough to show that $L_0^1(\mathbb{R})\hat{\otimes}_{\pi}L^1(\mathbb{R})$ is isometrically isomorphic to $\left\{f \in L^1(\mathbb{R}^2): \int\limits_{\mathbb{R}} f(x,y) dx=0,~ \mbox{a.e}~ y\right\}$.
   
   Put $V= \Phi\left(L_0^1(\mathbb{R})\hat{\otimes}_{\pi}L^1(\mathbb{R})\right)= \overline{span\{f(x)g(y) : f \in L_0^1(\mathbb{R}), g \in L^1(\mathbb{R})\}}^{\|\cdot\|_1} \subset L^1(\mathbb{R}^2)$, and $W= \left\{f \in L^1(\mathbb{R}^2): \int\limits_{\mathbb{R}} f(x,y) dx=0,~ \mbox{a.e}~ y\right\}$. It is immediate that $V \subset W$. Suppose $F \in W$. Then there exists $(F_n) \subset L^1(\mathbb{R}^2)$, a sequence of finite sums of elementary $f \otimes g$; approximating $F$ in $L^1$ norm. Choose $\psi \in C_c(\mathbb{R})$ (space of all compactly supported continuous functions) with $\int\limits_{\mathbb{R}} \psi dx =1$ and consider
   $$\tilde{F_n}(x,y)= F_n(x,y) - \left(\int\limits_{\mathbb{R}}F_n(z,y) dz\right)\psi(x).$$ 
   Then $\int\limits_{\mathbb{R}}\tilde{F_n}(x,y) dx=0$. Now we show that $\tilde{F_n} \in span\{f(x)g(y) : f \in L_0^1(\mathbb{R}), g \in L^1(\mathbb{R})\}$. 
   
   Set $c_n(y)= \int\limits_{\mathbb{R}}F_n(z,y) dz = \sum\limits_{i=1}^{k} \left(\int\limits_{\mathbb{R}} f_i(z) dz\right) g_i(y) \in L^1(\mathbb{R})$. Therefore,
   \begin{eqnarray*}
       \tilde{F_n}(x,y) &=& \sum\limits_{i=1}^{k} f_i(x)g_i(y) - c_n(y) \psi(x) \\
       &=& \sum\limits_{i=1}^{k} f_i(x)g_i(y) - \psi(x) \sum\limits_{i=1}^{k} \left(\int\limits_{\mathbb{R}} f_i(z) dz\right) g_i(y)\\
       &=& \sum\limits_{i=1}^{k} \underbrace{\left(f_i(x) - \left(\int\limits_{\mathbb{R}} f_i(z) dz\right) \psi(x)\right)} g_i(y) \in span\left\{f(x)g(y) : f \in L_0^1(\mathbb{R}), g \in L^1(\mathbb{R})\right\}.
       \end{eqnarray*}
So it remains to prove that $\| \tilde{F}_n - F \|_{L^1(\mathbb{R}^2)} \to 0 $.

We estimate
$$
\| \tilde{F}_n - F \|_{L^1(\mathbb{R}^2)}
= \| F_n - c_n(y)\psi(x) - F \|_{L^1(\mathbb{R}^2)}
\leq \| F_n - F \|_{L^1(\mathbb{R}^2)} + \| c_n(y)\psi(x) \|_{L^1(\mathbb{R}^2)}.$$

Since $ F_n \to F $ in $L^1$ norm, we have
$
\| F_n - F \|_{L^1(\mathbb{R}^2)}$ converges to $0$ as $n \to \infty$.
Now  
$$
\| c_n(y)\psi(x) \|_{L^1(\mathbb{R}^2)}
= \int_{\mathbb{R}} \int_{\mathbb{R}} |c_n(y)\psi(x)|\,dx\,dy
= \|\psi\|_{L^1(\mathbb{R})} \cdot \|c_n\|_{L^1(\mathbb{R})}.
$$
Further we estimate $ \|c_n\|_{L^1(\mathbb{R})}$. Using the assumption that $F \in W$ that is $ \int F(x, y)\,dx = 0$, we obtain
$$
|c_n(y)| = \left| \int_{\mathbb{R}} F_n(x, y)\,dx \right|
= \left| \int_{\mathbb{R}} \left(F_n(x, y) - F(x, y)\right)\,dx \right|
\leq \int_{\mathbb{R}} |F_n(x, y) - F(x, y)|\,dx.
$$
Now, integrating over $y$, we get
$$
\| c_n \|_{L^1(\mathbb{R})}
= \int_{\mathbb{R}} |c_n(y)|\,dy
\leq \int_{\mathbb{R}} \int_{\mathbb{R}} |F_n(x, y) - F(x, y)|\,dx\,dy
= \| F_n - F \|_{L^1(\mathbb{R}^2)}.
$$
Thus
$
\| c_n(y)\psi(x) \|_{L^1} = \|\psi\|_{L^1(\mathbb{R})} \|c_n\|_{L^1(\mathbb{R})} $ converges to $0$. Hence, the proof follows.
\end{proof}
\begin{remark}
    Similarly, we can prove that $F(\mathbb{R})\hat{\otimes}_{\pi}\ker(\beta_\mathbb{R})$ is isometrically isomorphic to $$\left\{f \in L^1(\mathbb{R}^2): \int\limits_{\mathbb{R}} f(x,y) dy=0,~ \mbox{a.e}~ x\right\}.$$
    
    Hence we get 
    \begin{eqnarray*}
        \ker(\beta_\mathbb{R})\hat{\otimes}_{\pi}F(\mathbb{R}) + F(\mathbb{R})\hat{\otimes}_{\pi} \ker(\beta_\mathbb{R}) &\cong& \left\{f \in L^1(\mathbb{R}^2): \int\limits_{\mathbb{R}} f(x,y) dx=0,~ \mbox{a.e}~ y\right\}\\
        &+&\left\{f \in L^1(\mathbb{R}^2): \int\limits_{\mathbb{R}} f(x,y) dy=0,~ \mbox{a.e}~ x\right\}
    \end{eqnarray*}
    
\end{remark}
{\bf Acknowledgment.} The author gratefully acknowledges Dr. Debkumar Giri for his valuable suggestions that helped to shape the article in its present form.

{\bf Declaration.} 
The author was financially supported by the Senior Research Fellowship from the National Institute of Science Education and Research, Bhubaneswar, funded by the Department of Atomic Energy, Government of India. The author has no competing interests or conflicts of interest to declare relevant to this article's content.
%\bibliographystyle{plain}
%  This inserts the bib file, biblio.bib
%\bibliography{reference}

%\bibliography{reference}

\end{document}